\theoremstyle{plain}
\newtheorem{theorem}{Theorem}[section]
\newtheorem{lemma}[theorem]{Lemma}
\newtheorem{proposition}[theorem]{Proposition}
\newtheorem{corollary}[theorem]{Corollary}
\theoremstyle{definition}
\theoremstyle{remark}
\newtheorem{remark}[theorem]{Remark}
\numberwithin{equation}{section}
\newcommand{\longto}{\longrightarrow}
\newcommand{\ag}{\mathscr{A}_{g}}
\newcommand{\agn}{\mathscr{A}_{g,N}}
\newcommand{\agsat}{\mathscr{A}_{g}^{\text{Sat}}}
\newcommand{\agnsat}{\mathscr{A}_{g,N}^{\text{Sat}}}
\newcommand{\agntor}{\mathscr{A}_{g,N}^{\text{tor}}}
\newcommand{\amod}{A\text{-{\bf Mod}}}
\newcommand{\CC}{\mathbb{C}}
\newcommand{\RR}{\mathbb{R}}
\newcommand{\EE}{\mathbb{E}}
\newcommand{\FF}{\mathbb{F}}
\newcommand{\NN}{\mathbb{N}}
\newcommand{\PP}{\mathbb{P}}
\newcommand{\HH}{\mathcal{H}}
\newcommand{\esat}{\EE_{\text{Sat}}}
\newcommand{\etor}{\EE_{\text{tor}}}
\newcommand{\fp}{\mathbb{F}_p}
\newcommand{\fpbar}{\overline{\mathbb{F}}_p}
\newcommand{\GL}{\operatorname{GL}}
\newcommand{\GSp}{\operatorname{GSp}}
\renewcommand{\H}{\operatorname{H}}
\newcommand{\into}{\hookrightarrow}
\newcommand{\invlim}{\varprojlim}
\newcommand{\oh}{\mathscr{O}}
\newcommand{\osat}{\omega_{\text{Sat}}}
\newcommand{\otor}{\omega_{\text{tor}}}
\newcommand{\osatk}{\osat^{\otimes k}}
\newcommand{\otork}{\otor^{\otimes k}}
\newcommand{\phisat}{\Phi_{\text{Sat}}}
\newcommand{\phitor}{\Phi_{\text{tor}}}
\newcommand{\Proj}{\operatorname{Proj}}
\newcommand{\R}{\operatorname{R}}
\newcommand{\Spec}{\operatorname{Spec}}
\newcommand{\Sym}{\operatorname{Sym}}
\newcommand{\zmod}{\ZZ\text{-{\bf Mod}}}
\newcommand{\zpmod}{\ZZ_p\text{-{\bf Mod}}}
\newcommand{\zpgmod}{\ZZ_p[G(g,\ell)]\text{-{\bf Mod}}}
\newcommand{\ZZ}{\mathbb{Z}}
\title{Cohomological vanishing on Siegel modular varieties and applications to lifting Siegel
  modular forms\footnote{We thank Dawei Chen, Nora Ganter, Sam Grushevsky,
    Martin M\"oller, Paul Sobaje and Beno\^\i{}t Stroh for useful discussions.}}
\author{
Alexandru Ghitza\footnote{The first author was supported by 
Discovery Grant DP120101942 from the Australian Research Council and an Early Career
Researcher Grant from the University of Melbourne.}\\
University of Melbourne \\
{\tt aghitza@alum.mit.edu} \\
\and
Scott Mullane\footnote{The second author was partially supported by
an Australian Postgraduate Award.}\\
  Boston College\\
{\tt scott.mullane@bc.edu} \\
}
\date{\today}
\begin{document}
\thispagestyle{empty}

\maketitle

\begin{abstract}
  We use vanishing results for sheaf cohomology on Siegel modular
  varieties to study two lifting problems:
  \begin{enumerate}[(a)]
    \item When can Siegel modular forms (mod $p$) be lifted to characteristic
      zero?  This uses and extends previous results for \emph{cusp} forms by Stroh
      and Lan-Suh.
    \item When is the restriction of Siegel modular forms to the boundary of
      the moduli space a surjective map?  We investigate this question in
      arbitrary characteristic, generalising analytic results of Weissauer and
      Arakawa.
  \end{enumerate}
\end{abstract}

\section{Introduction}

A venerable technique in arithmetic geometry is to take an object defined over
the integers and study its reductions modulo various primes.  In the case of
classical modular forms with algebraic integer coefficients, this reduction
process gives rise to \emph{Serre-type modular forms (mod $p$)}, whose
$q$-expansion coefficients are in $\fpbar$.  There is a more intrinsic way of
producing modular forms with coefficients in $\fpbar$: in the moduli-theoretic
definition of modular forms, consider the moduli space of elliptic curves over
$\fpbar$.  This gives rise to \emph{Katz-type modular forms (mod $p$)}.  The
natural question is whether the two definitions agree--we formulate this
question as follows: ``Do all (Katz-type) modular forms (mod $p$) lift to
characteristic zero?''

\cite[Theorem 1.7.1]{katz-antwerp} provides a partial positive answer:
\begin{theorem}[Katz]
  All modular forms (mod $p$) of weight $k\geq 2$ and level $\Gamma(N)$ with
  $N\geq 3$ lift to characteristic zero.
\end{theorem}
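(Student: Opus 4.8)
The plan is to realise level-$\Gamma(N)$ modular forms of weight $k$ as global sections of a line bundle on a smooth proper model of the modular curve over $\ZZ[1/N]$, and then to extract the lifting statement from the vanishing of a single $\H^1$ on the mod-$p$ fibre together with cohomology-and-base-change. Concretely, for $N\geq 3$ (and $p\nmid N$, the only case in which modular forms mod $p$ of level $\Gamma(N)$ are defined) the moduli problem of elliptic curves with full level-$N$ structure is representable by a smooth affine curve $Y(N)/\ZZ[1/N]$ carrying a universal elliptic curve $\pi\colon\mathcal{E}\to Y(N)$; I would let $X(N)\supset Y(N)$ be the smooth compactification, proper over $\ZZ[1/N]$, obtained by adjoining the cuspidal divisor $\partial$, and $\omega=\pi_*\Omega^1_{\mathcal{E}/Y(N)}$, extended to a line bundle on $X(N)$ in the usual way, so that a (Katz-type) weight-$k$ modular form over a $\ZZ[1/N]$-algebra $R$ is by definition an element of $\H^0(X(N)_R,\omega^{\otimes k})$.

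Next I would reduce the theorem to a cohomological vanishing. Since $X(N)$ is flat and proper over $\ZZ[1/N]$ and $\omega^{\otimes k}$ is flat, the cohomology-and-base-change theorem shows that $\H^1(X(N)_{\fpbar},\omega^{\otimes k})=0$ forces $\R^1\pi_*\omega^{\otimes k}$ to vanish near the corresponding point and hence that formation of $\H^0(-,\omega^{\otimes k})$ commutes with base change there. Granting this vanishing for $k\geq 2$, I would pick a complete DVR $R$ with residue field $\fpbar$ and fraction field of characteristic zero; then $\H^0(X(N)_R,\omega^{\otimes k})$ is finite free over $R$ and surjects onto $\H^0(X(N)_{\fpbar},\omega^{\otimes k})$, and tensoring with $\mathrm{Frac}(R)$ and fixing an embedding into $\CC$ exhibits each mod-$p$ form as the reduction of a characteristic-zero form. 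Everything thus comes down to the vanishing.

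To prove $\H^1(X(N)_{\fpbar},\omega^{\otimes k})=0$ for $k\geq 2$ I would argue componentwise on the smooth proper curve $C\subseteq X(N)_{\fpbar}$ (its fibres over $\ZZ[1/N]$ need not be geometrically connected, but this is harmless). Serre duality gives $\H^1(C,\omega^{\otimes k})^\vee\cong\H^0\bigl(C,\Omega^1_C\otimes\omega^{\otimes(-k)}\bigr)$; the Kodaira--Spencer isomorphism identifies $\Omega^1_C$ with $\omega^{\otimes 2}(-\partial)$, so the dual becomes $\H^0\bigl(C,\omega^{\otimes(2-k)}(-\partial)\bigr)$, the sections of a line bundle of degree $(2-k)\deg\omega-\#(\partial\cap C)$. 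Now $\deg\omega>0$ (the discriminant is a nowhere-vanishing section of $\omega^{\otimes 12}$ on $Y(N)$, hence a section of $\omega^{\otimes 12}$ on $X(N)$ whose effective divisor is supported on, and meets, $\partial$) and $\partial\cap C\neq\varnothing$, so for every $k\geq 2$ this degree is strictly negative; a line bundle of negative degree on an integral proper curve has no nonzero global sections, and the vanishing follows.

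The formal steps above are routine; the real content lies in the geometric input, namely that $X(N)$ is genuinely smooth and proper over $\ZZ[1/N]$ and that the Kodaira--Spencer isomorphism $\Omega^1_{X(N)}\cong\omega^{\otimes 2}(-\partial)$ holds integrally, the subtle point being the behaviour at the cusps, where one passes to the Tate curve and checks that Kodaira--Spencer acquires exactly a logarithmic pole along $\partial$. That twist by $-\partial$ is precisely what pushes the Serre-dual degree below zero already at $k=2$; correspondingly it is essential that we lift \emph{holomorphic} forms rather than cusp forms, for which the dual bundle would be $\omega^{\otimes(2-k)}$ with no twist, of degree $0$ when $k=2$, so that weight-$2$ cusp forms can genuinely fail to lift once the genus is positive.
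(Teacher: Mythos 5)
Your proposal is correct and takes essentially the same route as the paper: the $g=1$ base case of Theorem~\ref{thm:vanishing_satake} proves $\H^1$-vanishing via Serre duality together with the identification of the canonical bundle of $X(N)$ with $\omega^{\otimes 2}(-D)$ (your Kodaira--Spencer step) and a negative-degree count, and Corollary~\ref{cor:cohom_base_change} then converts that vanishing into the lifting statement, exactly as you outline. Your observation that the $-D$ twist is precisely what pushes the dual degree below zero already at $k=2$ is a useful clarification of the edge case; the paper's displayed argument only writes out $k\geq 3$ because it is matching the general bound $k\geq g+2$, but its argument covers $k=2$ by the same remark.
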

(We give a variant of Katz's argument in the proof of
Theorem~\ref{thm:vanishing_satake}.)

In the context of his computational exploration of Serre's conjecture, Mestre
found examples of modular forms (mod $2$) of weight $1$ that do not lift to
characteristic zero.  His smallest example appears in level $1429$.  These
computations were reproduced and extended by Wiese; both his and Mestre's
approach appear as appendices to~\cite{edixhoven-integral}.  The interested
reader would also benefit from reading Buzzard's note on computing weight $1$
forms~\cite{buzzard}, as well as the novel and systematic approach given in
Schaeffer's PhD thesis~\cite{schaeffer}.

The question of liftability of modular forms (mod $p$) on \emph{higher rank
groups} has recently received some attention.  Stroh proved that scalar-valued
Siegel cusp forms (mod $p$) of degree $2$ and weight $k\geq 4$ for $p>2$, or
degree $3$ and weight $k\geq 5$ for $p>5$, can be lifted to characteristic
zero~\cite[Th\'eor\`eme 1.1]{stroh-lifting}.  More recently, Lan and Suh
proved that on a Shimura variety $X$ of PEL type, any cusp form (mod $p$) for
$p\geq\dim(X)$ of strictly positive parallel cohomological weight lifts to
characteristic zero~\cite[Theorem 4.1]{lan-suh}.  Restricted to the Siegel
case, this gives liftability of scalar-valued Siegel cusp forms of degree $g$
and weight $k\geq g+2$ for $p\geq g(g+1)/2$.


Our main results are Theorem~\ref{thm:vanishing_satake} and
Corollary~\ref{cor:lifting}, which extend these liftability theorems from
Siegel \emph{cusp} forms to Siegel \emph{modular} forms.  The arguments of
Stroh and Lan-Suh are based on vanishing theorems for the cohomology of line
bundles of cusp forms on a \emph{toroidal compactification} of the Siegel
modular variety.  It is then necessary to investigate what happens along the
boundary.  Our strategy is to pass to the \emph{Satake compactification},
whose boundary consists of strata isomorphic to Siegel modular varieties of
\emph{smaller degree}.  In other words, these correspond to smaller instances
of the problem, enabling us to set up an inductive argument.  The missing
ingredient is a comparison between higher cohomology of line bundles on the
toroidal and Satake compactifications.  This is our
Theorem~\ref{thm:koecher_higher}, which uses the vanishing of relative
cohomology of cuspidal forms, proved recently and independently by
Stroh~\cite{stroh-relative} and
Andreatta-Iovita-Pilloni~\cite{andreatta-iovita-pilloni}.

A pleasant side effect of our strategy is that it also yields information on
the surjectivity of the Siegel $\Phi$-operator, which restricts Siegel modular
forms to the boundary of the moduli space.  We give these results in
Section~\ref{sect:surjectivity}; in characteristic zero, we can even handle
vector-valued forms, via a vanishing theorem for vector bundles on Siegel
modular varieties described in Section~\ref{sect:vector}.

There are no known examples of Siegel modular forms (mod $p$) of small weight
that do not lift to characteristic zero.  The naive search for such forms
would require computing with Siegel modular forms of high level; however, this
appears to be presently out of reach even if we restrict to the simplest
setting of scalar-valued forms of degree $2$.



Note that we assume $N\geq 3$ for most of the paper.  In
Section~\ref{sect:lowlevel} we describe how to extend our results to the low
level cases $N=1$ and $N=2$.

\section{Siegel modular varieties and forms}


Let $\agn$ denote the moduli space\footnote{Recall that, by assumption, $N\geq
3$.} of principally polarized $g$-dimensional abelian varieties with full
level $N$ structure.  It is a smooth quasi-projective scheme of dimension
$g(g+1)/2$ over $\ZZ[1/N]$, see~\cite[Theorem 7.9]{git}.  Let $A$ denote the
universal abelian variety, so that $f\colon A\longto\agn$ is a smooth morphism
of relative dimension $g$.

The \emph{Hodge bundle} $\mathbb{E}$ is the rank $g$ vector bundle on $\agn$
defined by
\begin{equation*}
  \mathbb{E}=f_* \Omega^1_{A/\agn}.
\end{equation*}

Let $\rho$ be an irreducible representation of the algebraic group $\GL_g$ and
let $\lambda=(\lambda_1,\ldots,\lambda_g)$ be its highest weight vector.  By
applying $\rho$ to the transition functions of the vector bundle $\mathbb{E}$,
we obtain a rank $d=\dim\rho$ vector bundle $\mathbb{E}^{\rho}$.  An important
special case is $\rho=\det$, and we denote the resulting line bundle by
$\omega=\mathbb{E}^{\det}=\det\mathbb{E}$.

Given a $\ZZ[1/N]$-algebra $B$, the space of \emph{Siegel modular forms} of
degree $g$, weight $\rho$ and level $\Gamma(N)$ with coefficients in $B$ is
the $B$-module defined by
\begin{equation*}
  M_\rho(N;B)=\H^0\left(\agn, \mathbb{E}^\rho\otimes B\right).
\end{equation*}
In particular, if $p$ is a prime number not dividing $N$, the elements of
$M_\rho(N;\fpbar)$ are called \emph{Siegel modular forms (mod $p$)}.

A construction due to Ash-Mumford-Rapoport-Tai associates to a choice of
combinatorial data (a cone decomposition) a \emph{toroidal compactification}
$\agntor$ of $\agn$.  It is possible to choose $\agntor$ in such a way that it
is a smooth projective scheme over $\ZZ[1/N]$, containing $\agn$ as a dense
open subscheme, and such that the boundary divisor $\agntor-\agn$ is simple
with normal crossings.  Moreover, $\agntor\times \Spec \CC$ is a smooth
projective complex manifold when $N\geq 3$.  There is a canonical extension of
the Hodge bundle $\mathbb{E}$ to a rank $g$ vector bundle $\etor$ on
$\agntor$.  The line bundle $\otor=\det\etor$ is the canonical extension of
$\omega$ to $\agntor$.

The \emph{Satake compactification} $\agnsat$ is the normal, proper scheme over
$\ZZ[1/N]$ given by
\begin{equation*}
  \agnsat=\Proj\left(\bigoplus_{k\geq 0} \H^0\left(\agntor, \otork\right)\right).
\end{equation*}
The main properties of $\agnsat$ are given in~\cite[Theorem
V.2.5]{faltings-chai}.  It contains $\agn$ as a dense open subscheme, and the
line bundle $\omega$ on $\agn$ extends to an ample line bundle $\osat$ on
$\agnsat$.  There is a canonical extension of the Hodge bundle $\mathbb{E}$ to
a coherent sheaf (but not a vector bundle) $\esat$ on $\agnsat$; similarly,
any twist of the Hodge bundle $\mathbb{E}^\rho$ on $\agn$ extends canonically
to a coherent sheaf $\esat^\rho$ on $\agnsat$.

The K\"ocher principle states that, if $g>1$, a Siegel modular form with
coefficients in some $\ZZ[1/N]$-algebra $B$ extends
uniquely to the Satake and toroidal compactifications:
\begin{align*}
  \H^0\left(\agnsat,\esat^\rho\otimes B\right)
  &=\H^0\left(\agn,\mathbb{E}^\rho\otimes B\right);\\
  \H^0\left(\agntor,\etor^\rho\otimes B\right)
  &=\H^0\left(\agn,\mathbb{E}^\rho\otimes B\right).
\end{align*}
The Satake case can be found in~\cite[Proposition 5]{ghitza-cuspidal}, and the
toroidal case in~\cite[Proposition V.1.8]{faltings-chai}.

If $D=\agntor-\agn$ denotes the boundary divisor of a toroidal
compactification $\agntor$, we define the space of \emph{Siegel cusp forms} of
weight $\rho$ and level $\Gamma(N)$ with coefficients in $B$ to be
\begin{equation*}
  S_\rho(N;B)=\H^0\left(\agntor,\etor^\rho(-D)\otimes B\right).
\end{equation*}
In other words, these are the Siegel modular forms that vanish along the
boundary of $\agntor$.  Their definition is independent of the choice of
toroidal compactification~\cite[page 144]{faltings-chai}.  
It is also possible to define cusp forms using the Satake compactification. If
$\Delta=\agnsat-\agn$ denotes the boundary of the Satake compactification and
$\mathscr{S}_\rho$ is the sheaf kernel of the restriction
$\esat^\rho\longto\esat^\rho\Big|_{\Delta}$.  By~\cite[Proposition
7]{ghitza-cuspidal}, the global sections of $\mathscr{S}_\rho$ are precisely
the cusp forms of weight $\rho$ and we have
\begin{equation*}
  S_\rho(N;B)=\H^0\left(\agnsat,\mathscr{S}_\rho\otimes B\right).
\end{equation*}

\section{Review of cohomology and base change}

Siegel modular forms are global sections of certain vector bundles.  The issue
of their liftability from positive characteristic to characteristic zero is
thus a question of certain cohomology groups commuting with base change.  This
is an essential topic in algebraic geometry, treated in many of the standard
references.  However, to our knowledge, none of these references contains a
precise statement of the result we need (Corollary~\ref{cor:cohom_base_change}).  We
give a proof in this section, which does little more than piece together the
necessary ingredients from~\cite[Chapter 7]{EGA3} and~\cite[Chapter
III]{hartshorne-algebraic-geometry}.  Another useful treatment of these
questions can be found in~\cite[Chapter 28]{vakil-foag}.

\begin{theorem}[{\cite[Corollary 7.5.5]{EGA3}}]
  \label{thm:ega_base}
  Let $A$ be a local noetherian ring with residue field $k=A/\mathfrak{m}$.
  Let $T_\bullet$ be a homological functor $\amod\to\zmod$ that commutes with
  direct limits.  Suppose that for every $q$ and every finitely generated
  $A$-module $M$, $T_q(M)$ is finitely generated and the canonical
  homomorphism
  \begin{equation*}
    \widehat{T_q(M)}\longto\invlim_{n} T_q\left(M\otimes_A
    A/\mathfrak{m}^{n+1}\right)
  \end{equation*}
  is bijective.
  \begin{enumerate}[(i)]
    \item If $T_q(k)=0$ then $T_q(M)=0$ for any $A$-module $M$, $T_{q+1}$ is
      right exact and $T_{q-1}$ is left exact.
    \item If $T_{q-1}(k)=T_{q+1}(k)=0$ then $T_q$ is exact, the canonical
      homomorphism
      \begin{equation*}
        T_q(A)\otimes_A M\longto T_q(M)
      \end{equation*}
      is bijective and $T_q(A)$ is a free $A$-module.
  \end{enumerate}
\end{theorem}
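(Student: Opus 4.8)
The plan is to reproduce the argument behind the cited result \cite[\S7.5]{EGA3}, isolating the one non-formal input (the completion hypothesis, used to detect vanishing) and deriving everything else from the long exact sequences attached to the homological functor $T_\bullet$.

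First I would establish the vanishing assertion of (i): assuming $T_q(k)=0$, show $T_q(M)=0$ for every $A$-module $M$. Since $T_q$ commutes with direct limits and every $A$-module is the filtered colimit of its finitely generated submodules, it suffices to treat finitely generated $M$. For such $M$ the hypotheses say that $T_q(M)$ is finitely generated and that $\widehat{T_q(M)}\cong\invlim_n T_q(M\otimes_A A/\mathfrak{m}^{n+1})$. As $A$ is noetherian local, $\widehat{A}$ is faithfully flat over $A$ and $\widehat{T_q(M)}=T_q(M)\otimes_A\widehat{A}$, so $T_q(M)=0$ as soon as $T_q(M/\mathfrak{m}^{n+1}M)=0$ for all $n$. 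This reduces us to proving $T_q(N)=0$ for every $A$-module $N$ of finite length; such an $N$ admits a composition series with all successive quotients isomorphic to $k$, so feeding each step $0\to N'\to N''\to k\to 0$ into the long exact sequence of $T_\bullet$ and using $T_q(k)=0$, a trivial induction on the length gives $T_q(N)=0$.

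With the vanishing in hand the exactness statements are purely formal. For a short exact sequence $0\to M'\to M\to M''\to 0$, the long exact sequence contains $T_{q+1}(M)\to T_{q+1}(M'')\to T_q(M')=0$, so $T_{q+1}$ is right exact, and it contains $0=T_q(M'')\to T_{q-1}(M')\to T_{q-1}(M)$, so $T_{q-1}$ is left exact; this proves (i). For (ii), apply (i) with $q$ replaced by $q-1$ and by $q+1$: from $T_{q-1}(k)=0$ we get that $T_q$ is right exact (and $T_{q-1}$ vanishes identically), and from $T_{q+1}(k)=0$ we get that $T_q$ is left exact, hence $T_q$ is exact. Functoriality of $T_q$ applied to the multiplication maps $A\to A$ makes $T_q(A)$ an $A$-module and produces, naturally in $M$, a homomorphism $T_q(A)\otimes_A M\to T_q(M)$ that is the identity for $M=A$. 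Both $T_q(A)\otimes_A(-)$ and $T_q(-)$ are right exact and commute with arbitrary direct sums (the latter since $T_q$ is additive and commutes with direct limits), so presenting $M$ as the cokernel of a map $A^{(J)}\to A^{(I)}$ and chasing the resulting diagram shows this homomorphism is an isomorphism for all $M$ (the Eilenberg--Watts comparison). Hence $T_q(-)\cong T_q(A)\otimes_A(-)$ is exact, so $T_q(A)$ is flat; it is finitely generated by hypothesis (take $M=A$), and a finitely generated flat module over a local ring is free.

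The only genuinely substantive step is the reduction to finite length modules, where the completion hypothesis and the finiteness of $T_q(M)$ are both essential; once that is in place, d\'evissage and the long exact sequences dispatch (i), and (ii) follows formally from (i) together with the Eilenberg--Watts argument.
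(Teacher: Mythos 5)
The paper does not prove this theorem; it states it as a direct citation of EGA~III, Corollaire~7.5.5, so there is no in-paper argument to compare against (the remark about ``piecing together the necessary ingredients'' refers to the proof of Corollary~\ref{cor:cohom_base_change}, not to this statement). Your reconstruction is correct and follows the standard EGA route: commute $T_q$ past direct limits to reduce to finitely generated $M$; invoke the completion hypothesis together with faithful flatness of $\widehat{A}$ over the noetherian local ring $A$ (Nakayama would do equally well) to reduce the vanishing of $T_q(M)$ to the vanishing of $T_q$ on finite-length modules; d\'evisse a finite-length module to copies of $k$ and use the long exact sequence of the homological functor; the exactness claims in (i) drop out of the same long exact sequence, and (ii) is the formal consequence you describe, with the Eilenberg--Watts comparison identifying $T_q(-)$ with $T_q(A)\otimes_A(-)$ and the final observation that a finitely generated flat module over a noetherian local ring is free. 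One small point worth being explicit about, since you state it implicitly: the modules $M\otimes_A A/\mathfrak{m}^{n+1}$ to which you reduce really do have finite length, because $M$ is finitely generated and $A/\mathfrak{m}^{n+1}$ is Artinian local. With that noted, the argument is complete.
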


\begin{corollary}
  \label{cor:cohom_base_change}
  Let $A$ be a local noetherian ring with residue field $k=A/\mathfrak{m}$.
  Let $X$ be a projective scheme over $\Spec A$ and $\mathcal{F}$ a coherent
  $\oh_X$-module, flat over $\Spec A$.
  \begin{enumerate}[(i)]
    \item If $\H^q(X\times\Spec k,\mathcal{F})=0$, then
      $\H^q(X,\mathcal{F}\otimes_A M)=0$ for any $A$-module $M$.
    \item If
      \begin{equation*}
        \H^{q-1}(X\times\Spec k,\mathcal{F})=
        \H^{q+1}(X\times\Spec k,\mathcal{F})=0,
      \end{equation*}
      then the canonical map
      \begin{equation*}
        \H^q(X,\mathcal{F})\otimes_A M\longto
        \H^q(X,\mathcal{F}\otimes_A M)
      \end{equation*}
      is bijective and $\H^q(X,\mathcal{F})$ is a free $A$-module.
  \end{enumerate}
\end{corollary}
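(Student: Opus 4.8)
The plan is to deduce the corollary from Theorem~\ref{thm:ega_base} by taking, for $X$ and $\mathcal{F}$ as in the statement, the functor
\[
  T_q\colon\amod\longto\zmod,\qquad T_q(M)=\H^q\left(X,\mathcal{F}\otimes_A M\right),\qquad q\in\ZZ,
\]
with $T_q=0$ for $q<0$. Since pushforward along the closed immersion $X\times\Spec k\into X$ is exact, $T_q(k)=\H^q(X\times\Spec k,\mathcal{F})$, and trivially $T_q(A)=\H^q(X,\mathcal{F})$; hence statements (i) and (ii) of the corollary are literal translations of statements (i) and (ii) of Theorem~\ref{thm:ega_base} for this $T_\bullet$, once the three standing hypotheses of that theorem have been checked. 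The bulk of the proof is that verification.

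First I would check that $T_\bullet$ is a homological functor commuting with direct limits. Because $\mathcal{F}$ is $A$-flat, $M\mapsto\mathcal{F}\otimes_A M$ is an exact functor from $\amod$ to quasi-coherent $\oh_X$-modules, so a short exact sequence of $A$-modules yields a short exact sequence of quasi-coherent sheaves on $X$, and the connecting maps of the long exact cohomology sequence make $(T_q)_{q\in\ZZ}$ a homological ($\delta$-)functor in the sense of Theorem~\ref{thm:ega_base}. That $T_\bullet$ commutes with direct limits follows because $-\otimes_A-$ does, and because $X$ is noetherian (a fortiori quasi-compact and quasi-separated), so sheaf cohomology on $X$ commutes with filtered direct limits of quasi-coherent modules.

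Next, for a finitely generated $A$-module $M$ the sheaf $\mathcal{F}\otimes_A M$ is coherent, and $X$ is projective --- in particular proper --- over the noetherian ring $A$, so Grothendieck's finiteness theorem gives that $T_q(M)=\H^q(X,\mathcal{F}\otimes_A M)$ is a finitely generated $A$-module. The remaining hypothesis, which I expect to be the real content, is that the canonical map $\widehat{T_q(M)}\longto\invlim_n T_q(M\otimes_A A/\mathfrak{m}^{n+1})$ (with $\widehat{\phantom{M}}$ denoting $\mathfrak{m}$-adic completion) is bijective for every finitely generated $M$. One reduces to the case $M=A$: taking a presentation $A^{\oplus a}\to A^{\oplus b}\to M\to 0$, tensoring with $\mathcal{F}$ and with each $A/\mathfrak{m}^{n+1}$, and using flatness of $\mathcal{F}$ and right-exactness of $\otimes$, exactness of completion over the noetherian ring $A$, and the fact that the relevant inverse systems have surjective transition maps (so $\invlim$ is exact on them), a diagram chase reduces the general assertion to $M=A$. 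For $M=A$ the statement $\widehat{\H^q(X,\mathcal{F})}\cong\invlim_n\H^q(X_n,\mathcal{F}_n)$, with $X_n=X\times_{\Spec A}\Spec A/\mathfrak{m}^{n+1}$ and $\mathcal{F}_n=\mathcal{F}\otimes_A A/\mathfrak{m}^{n+1}$, is precisely Grothendieck's theorem on formal functions for the proper morphism $X\to\Spec A$; see~\cite[III.11.1]{hartshorne-algebraic-geometry} or~\cite[III.4.1.5]{EGA3}.

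With the three hypotheses in hand, part~(i) of Theorem~\ref{thm:ega_base} applied at the index $q$ says that $T_q(k)=\H^q(X\times\Spec k,\mathcal{F})=0$ forces $T_q(M)=\H^q(X,\mathcal{F}\otimes_A M)=0$ for every $A$-module $M$, which is Corollary~\ref{cor:cohom_base_change}(i). Part~(ii) applied at $q$ says that if $T_{q-1}(k)=T_{q+1}(k)=0$ --- i.e. if $\H^{q-1}(X\times\Spec k,\mathcal{F})=\H^{q+1}(X\times\Spec k,\mathcal{F})=0$ --- then $T_q$ is exact, $T_q(A)$ is a free $A$-module, and the canonical map $T_q(A)\otimes_A M\to T_q(M)$ is bijective; unwinding $T_q(A)=\H^q(X,\mathcal{F})$ and $T_q(M)=\H^q(X,\mathcal{F}\otimes_A M)$ gives Corollary~\ref{cor:cohom_base_change}(ii). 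This completes the proof.
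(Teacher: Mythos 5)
Your proof is correct in substance and reaches the same conclusion, but by a different route than the paper. The paper's proof is a more compact sequence of citations: after checking the homological-functor and direct-limit conditions via Hartshorne III.12.1 and III.2.9, it invokes Hartshorne III.12.2 to produce a Grothendieck complex $L^\bullet$ of finitely generated free $A$-modules with $T_q(M)\cong\H^q(L^\bullet\otimes_A M)$, at which point a single reference, EGA III 7.4.7, delivers both the finite generation of $T_q(M)$ and the bijectivity of $\widehat{T_q(M)}\to\invlim_n T_q(M\otimes_A A/\mathfrak{m}^{n+1})$ in one stroke. You instead verify the two standing hypotheses separately and from first principles: finite generation via Grothendieck's finiteness theorem for proper morphisms, and the completion isomorphism via the theorem on formal functions. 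Both work; the paper's route is shorter because the complex $L^\bullet$ packages the information in exactly the form EGA 7.4.7 needs, while yours is more self-contained and makes the role of the theorem on formal functions explicit.

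One small inefficiency, bordering on a gap, in your argument for the completion isomorphism: you reduce the assertion to $M=A$ via a presentation $A^{\oplus a}\to A^{\oplus b}\to M\to 0$ and a ``diagram chase,'' but $T_q$ is not right exact, so the presentation does not directly yield exact sequences of $T_q$'s; one would need to introduce the kernel $K=\ker(A^{\oplus b}\to M)$, use the long exact sequence, and handle the fact that $K$ is finitely generated but not free (e.g.\ by descending induction on $q$). You never actually need this reduction: for finitely generated $M$ the sheaf $\mathcal{F}\otimes_A M$ is coherent, and the theorem on formal functions applied to $\mathcal{F}\otimes_A M$ gives
$\widehat{\H^q(X,\mathcal{F}\otimes_A M)}\cong\invlim_n\H^q(X_n,(\mathcal{F}\otimes_A M)_n)=\invlim_n\H^q(X,\mathcal{F}\otimes_A(M\otimes_A A/\mathfrak{m}^{n+1}))$
directly, with no reduction step. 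Replacing the sketched reduction by this direct application would make your argument complete.
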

\begin{proof}
It suffices to show that $T_q(M):=\H^q(X,\mathcal{F}\otimes_A M)$
satisfies the conditions of Theorem~\ref{thm:ega_base}.

By~\cite[Proposition III.12.1]{hartshorne-algebraic-geometry}, $T_\bullet$ is
a homological functor.  It commutes with direct limits by~\cite[Proposition
III.2.9]{hartshorne-algebraic-geometry}.

By~\cite[Proposition III.12.2]{hartshorne-algebraic-geometry}, there exists a
complex $L^\bullet$ of finitely generated free $A$-modules and an isomorphism
of functors (in $M$)
\begin{equation*}
  T_q(M)\cong\H^q(L^\bullet\otimes_A M).
\end{equation*}
In particular, the $A$-modules $L^j$ are flat, so we are in the setting
of~\cite[Section 7.4]{EGA3}.  As the $L^j$ are also finitely generated,
\cite[Proposition 7.4.7]{EGA3} indicates that $T_q(M)$ is finitely generated
for any finitely generated $M$, and that the canonical map
\begin{equation*}
  \widehat{T_q(M)}\longto\invlim_{n} T_q\left(M\otimes_A
  A/\mathfrak{m}^{n+1}\right)
\end{equation*}
is an isomorphism.
\end{proof}

\section{Positivity of Hodge line bundles}

We fix a choice of smooth, projective toroidal compactification $\agntor$ of
$\agn$, we let $D$ denote the simple normal crossings divisor $\agntor-\agn$,
and we let $\etor$ denote the canonical extension to $\agntor$ of the Hodge
bundle $\mathbb{E}$ on $\agn$.  The objective of this section is to collect
results about the positivity properties of $\otor=\det\etor$ and deduce the
vanishing of certain cohomology groups.

A line bundle $\mathcal{L}$ on a projective variety $X$ is
\emph{ample} if
\begin{equation*}
  (\mathcal{L}\cdot C)_X > 0
\end{equation*}
for every closed reduced irreducible curve $C\subset X$; here the
intersection number 
$(\mathcal{L}\cdot C)_X$ is the coefficient of $m$ in the polynomial
$\chi(\oh_C\otimes\mathcal{L}^{\otimes m})$, where $\chi(\mathcal{F})$ denotes
the Euler characteristic of the sheaf $\mathcal{F}$.  Our interest in ampleness is
motivated by the following classical result (see~\cite[Theorem
4.2.1]{lazarsfeld-positivity-1}):
\begin{theorem}[Kodaira Vanishing Theorem]
  Let $X$ be a smooth complex projective variety and
  $\mathcal{L}$ an ample line bundle on $X$.  For all $q>0$ we have
  \begin{equation*}
    \H^q\left(X, \omega_X\otimes\mathcal{L}\right)=0,
  \end{equation*}
  where $\omega_X$ is the canonical sheaf of $X$.
\end{theorem}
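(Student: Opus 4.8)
The statement is the classical Kodaira vanishing theorem, and I would give the standard analytic (Bochner--Kodaira--Nakano) proof, which in fact yields the stronger Akizuki--Nakano statement $\H^q(X,\Omega^p_X\otimes\mathcal{L})=0$ for $p+q>\dim X$, the case $p=\dim X$ being Kodaira vanishing. (Equivalently, by Serre duality, one proves $\H^i(X,\mathcal{L}^{-1})=0$ for $i<\dim X$.) First I would fix a smooth Hermitian metric $h$ on $\mathcal{L}$ whose Chern curvature form $\Theta_h$ is positive. Such a metric exists because $\mathcal{L}$ is ample: some power $\mathcal{L}^{\otimes m}$ is very ample and gives an embedding $\iota\colon X\into\PP^N$, and one takes $h$ to be the $m$-th root of the pullback $\iota^*h_{\mathrm{FS}}$ of the Fubini--Study metric, so that $\frac{i}{2\pi}\Theta_h=\frac1m\iota^*\omega_{\mathrm{FS}}$ is a Kähler form $\omega$ on $X$. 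I equip $X$ with this Kähler metric.

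By Dolbeault's theorem and Hodge theory for the Dolbeault complex of the holomorphic bundle $\Omega^n_X\otimes\mathcal{L}$ on the compact Kähler manifold $X$ ($n=\dim X$), every class in $\H^q(X,\omega_X\otimes\mathcal{L})=\H^q(X,\Omega^n_X\otimes\mathcal{L})$ is represented by a unique harmonic $\mathcal{L}$-valued $(n,q)$-form $\alpha$, i.e. $\Delta_{\overline{\partial}}\alpha=0$, where $\Delta_{\overline{\partial}}=\overline{\partial}\,\overline{\partial}^{\,*}+\overline{\partial}^{\,*}\,\overline{\partial}$ is the $\overline{\partial}$-Laplacian built from the Chern connection. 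The core of the argument is the Bochner--Kodaira--Nakano identity on the Kähler manifold $X$,
\[
  \Delta_{\overline{\partial}}=\Delta_{\partial}+[\,i\Theta_h,\Lambda\,],
\]
where $\Delta_\partial$ is the Laplacian of the $(1,0)$-part of the Chern connection and $\Lambda$ is the adjoint of the Lefschetz operator $L=\omega\wedge(-)$. Pairing with $\alpha$ and integrating over $X$ yields
\[
  0=\langle\Delta_{\overline{\partial}}\alpha,\alpha\rangle
   =\|\partial_{\mathcal{L}}\alpha\|^2+\|\partial_{\mathcal{L}}^{\,*}\alpha\|^2
     +\langle[\,i\Theta_h,\Lambda\,]\alpha,\alpha\rangle .
\]

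The remaining point is the pointwise positivity of the curvature term on forms of type $(n,q)$. Working in a local frame that diagonalises $i\Theta_h=\sum_{j}\gamma_j\,dz_j\wedge d\bar z_j$ with all $\gamma_j>0$ (by positivity of the curvature), a standard computation shows that $[\,i\Theta_h,\Lambda\,]$ acts on an $(n,q)$-form $dz_1\wedge\cdots\wedge dz_n\wedge d\bar z_J$ by multiplication by $\sum_{j\in J}\gamma_j$, hence is diagonal with all eigenvalues $\geq q\cdot(\min_j\gamma_j)>0$ since $q\geq1$. Therefore $\langle[\,i\Theta_h,\Lambda\,]\alpha,\alpha\rangle\geq c\,\|\alpha\|^2$ for some constant $c>0$, and the displayed identity forces $\alpha=0$; thus $\H^q(X,\omega_X\otimes\mathcal{L})=0$ for all $q>0$.

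The main obstacle is precisely the Bochner--Kodaira--Nakano identity together with this positivity of $[\,i\Theta_h,\Lambda\,]$: this is where the hypothesis of positive curvature and the assumption $q>0$ enter essentially, and where restricting to top holomorphic degree $p=n$ in Kodaira's statement is exactly what makes the curvature term strictly positive rather than merely nonnegative. The construction of the positive metric and the Hodge-theoretic reduction to harmonic representatives are routine once one is on a compact Kähler manifold. A purely algebraic alternative --- reduce $X$ and $\mathcal{L}$ modulo a large prime, use the Deligne--Illusie decomposition of the de Rham complex for Frobenius-liftable varieties to obtain degeneration of the Hodge-to-de Rham spectral sequence, and conclude by a dimension count --- is available, but is heavier than needed here since only the statement over $\CC$ is required.
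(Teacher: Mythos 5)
The paper does not prove the Kodaira Vanishing Theorem; it simply records it as a classical input, with a pointer to \cite[Theorem 4.2.1]{lazarsfeld-positivity-1}, so there is no ``paper's own proof'' to compare yours against.

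Your proof is a correct and complete rendering of the standard analytic argument. The construction of a positively curved Hermitian metric from very ampleness and the Fubini--Study form, the reduction via Hodge theory to a harmonic representative of type $(n,q)$, and the Bochner--Kodaira--Nakano identity $\Delta_{\overline{\partial}} = \Delta_{\partial} + [\,i\Theta_h,\Lambda\,]$ are all used properly. The key eigenvalue computation is also right: on a form $dz_1\wedge\cdots\wedge dz_n\wedge d\bar z_J$, the commutator $[\,i\Theta_h,\Lambda\,]$ acts by $\sum_{j\in J}\gamma_j$, which is $\geq q\cdot\min_j\gamma_j>0$ when $q\geq 1$; combined with $\|\partial_{\mathcal{L}}\alpha\|^2+\|\partial_{\mathcal{L}}^{*}\alpha\|^2\geq 0$ this forces $\alpha=0$. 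Your remarks that the argument actually yields Akizuki--Nakano vanishing, and that an algebraic alternative via Deligne--Illusie exists, are accurate context. One could quibble that you do not prove the Bochner--Kodaira--Nakano identity or the local eigenvalue formula for $[\,i\Theta,\Lambda\,]$ from scratch, but these are standard and it is reasonable to cite them, just as the paper cites the whole theorem.
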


Unfortunately, $\otor$ is generally not an ample line bundle.  Luckily, it
comes close enough that we can still deduce vanishing of cohomology, as we now
see.

A line bundle $\mathcal{L}$ on a projective variety $X$ is
\emph{numerically effective} (\emph{nef}) if
\begin{equation*}
  (\mathcal{L}\cdot C)_X \geq 0
\end{equation*}
for every closed reduced irreducible curve $C\subset X$.

A line bundle $\mathcal{L}$ on an $n$-dimensional projective variety $X$ is
\emph{big} if there is a constant $c>0$ such that
\begin{equation*}
  \dim\H^0\left(X, \mathcal{L}^{\otimes m}\right)\geq c\cdot m^n
\end{equation*}
for sufficiently large $m\in\NN$.

For the purposes of vanishing of higher cohomology, we can replace
\emph{ample} with \emph{nef and big} (see~\cite[Theorem
4.3.1]{lazarsfeld-positivity-1}):
\begin{theorem}[Kawamata-Viehweg Vanishing Theorem]
  \label{thm:kawamata-viehweg}
  Let $X$ be a smooth complex projective variety and $\mathcal{L}$ a nef and
  big line bundle on $X$.  For all $q>0$ we have
  \begin{equation*}
    \H^q\left(X, \omega_X\otimes\mathcal{L}\right)=0,
  \end{equation*}
  where $\omega_X$ is the canonical sheaf of $X$.
\end{theorem}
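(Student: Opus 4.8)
The plan is to deduce this from the Kodaira Vanishing Theorem by means of Kawamata's covering construction (this is the route taken in the cited reference). First I would invoke \emph{Kodaira's lemma}: since $\mathcal{L}$ is big and nef, its class is $\QQ$-linearly equivalent to $A+B$ with $A$ an ample $\QQ$-divisor and $B$ an effective $\QQ$-divisor, and after clearing denominators $\mathcal{L}^{\otimes a}\cong\oh_X(A'+B')$ for some integer $a>0$, ample divisor $A'$ and effective divisor $B'$. Next I would pass to a log resolution $\mu\colon X'\to X$, chosen to be an isomorphism over the complement of $\operatorname{Supp}B'$ and so that the union of $\mu^{*}B'$ with the exceptional locus is a simple normal crossings divisor on the smooth projective variety $X'$; since $X$ is smooth and $\mu$ is a composition of blow-ups along smooth centres, one has $K_{X'}=\mu^{*}K_X+E$ with $E$ effective and $\mu$-exceptional, while $\mu^{*}\mathcal{L}^{\otimes a}$ is still nef and big and its ``positive defect'' $\mu^{*}B'$ now has normal crossings support.

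The core step is the \emph{cyclic covering trick}. Using that the relevant combination of $\mu^{*}B'$ with bookkeeping divisors is divisible by $a$ on $X'$, I would build a finite cyclic cover $\pi\colon Y\to X'$ of degree $a$, branched along that normal crossings divisor, with $Y$ smooth projective (take the normalisation of the cover cut out by a section of $\mu^{*}\mathcal{L}^{\otimes a}$, then resolve its quotient singularities). On $Y$ the pullback $\pi^{*}\mu^{*}\mathcal{L}$ becomes, after subtracting an effective $\QQ$-divisor whose round-down is trivial, the sum of an ample line bundle with such correction terms, so Kodaira Vanishing on the smooth projective variety $Y$ yields $\H^{q}(Y,\omega_Y\otimes\pi^{*}\mu^{*}\mathcal{L})=0$ for all $q>0$. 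Pushing forward along the finite map $\pi$ and isolating, in the eigensheaf decomposition of $\pi_{*}\omega_Y$, the summand equal to $\omega_{X'}\otimes\mu^{*}\mathcal{L}$, I obtain $\H^{q}(X',\omega_{X'}\otimes\mu^{*}\mathcal{L})=0$ for $q>0$.

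Finally I would descend along $\mu$. By Grauert--Riemenschneider (or directly, since $\mu$ is built from blow-ups of smooth centres) $\R^{j}\mu_{*}\omega_{X'}=0$ for $j>0$ and $\mu_{*}\omega_{X'}=\omega_X$; the projection formula then gives $\mu_{*}(\omega_{X'}\otimes\mu^{*}\mathcal{L})=\omega_X\otimes\mathcal{L}$ and $\R^{j}\mu_{*}(\omega_{X'}\otimes\mu^{*}\mathcal{L})=0$ for $j>0$, so the Leray spectral sequence transports the vanishing from $X'$ down to $\H^{q}(X,\omega_X\otimes\mathcal{L})=0$ for $q>0$. The main obstacle is the cyclic cover: arranging that $Y$ is smooth, and doing the $\QQ$-divisor bookkeeping with round-downs carefully enough that the Kodaira-type vanishing on $Y$ descends to exactly the sheaf $\omega_X\otimes\mathcal{L}$ and not to something twisted by spurious effective terms; keeping the higher direct images $\R^{j}\mu_{*}$ of the auxiliary sheaves under control is the second delicate point.
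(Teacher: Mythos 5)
The paper states this as a known result, citing Lazarsfeld, \emph{Positivity in Algebraic Geometry I}, Theorem 4.3.1, and does not supply a proof; there is therefore no argument in the paper to compare against. Your sketch reproduces, in outline, the standard proof found in that reference and in the original papers of Kawamata and Viehweg: Kodaira's lemma to split the big nef class as ample plus effective, a log resolution to put the effective part in simple normal crossings position, a cyclic covering trick to reduce to Kodaira vanishing, and Grauert--Riemenschneider descent along the resolution. The one clause written too loosely to be expanded as-is is the assertion that Kodaira vanishing on $Y$ yields $\H^q\bigl(Y,\omega_Y\otimes\pi^*\mu^*\mathcal{L}\bigr)=0$: the pullback $\pi^*\mu^*\mathcal{L}$ is still only nef and big, not ample, so Kodaira does not apply to it directly. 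What the cyclic cover actually supplies is an honest ample line bundle $M$ on $Y$ (coming from taking the $a$-th root of the ample part of the Kodaira-lemma decomposition), to which Kodaira vanishing is applied; the eigensheaf decomposition of the pushforward under the $\ZZ/a\ZZ$-action then isolates the summand $\omega_{X'}\otimes\mu^*\mathcal{L}$. You already flag the $\QQ$-divisor bookkeeping as the delicate point, so I would not call this a gap so much as an imprecision, but that sentence should be rewritten to name the specific ample bundle on $Y$ to which Kodaira is applied rather than implying the vanishing is being read off for $\pi^*\mu^*\mathcal{L}$ itself.
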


\begin{proposition}
  \label{prop:otor_nef_big}
  The line bundle $\otor$ on $\agntor$ is nef and big.
\end{proposition}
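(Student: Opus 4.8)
The plan is to establish nefness and bigness separately, using in both cases the well-understood behaviour of $\otor$ on the Satake side.

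For bigness, I would invoke the definition of $\agnsat$ as $\Proj$ of the ring of sections $\bigoplus_{k\geq 0}\H^0(\agntor,\otork)$. Since $\osat$ is ample on the projective scheme $\agnsat$ and pulls back to $\otor$ under the canonical morphism $\pi\colon\agntor\to\agnsat$, the growth rate $\dim\H^0(\agntor,\otork)=\dim\H^0(\agnsat,\osatk)$ is, for $k\gg 0$, a polynomial of degree $\dim\agnsat=g(g+1)/2=\dim\agntor$ (using that $\agntor\to\agnsat$ is birational and both are projective of the same dimension, together with the Köcher principle identification of sections already recorded in Section~2). Hence $\otor$ satisfies the bigness estimate with $n=g(g+1)/2$.

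For nefness, I would use that $\otor=\pi^*\osat$ with $\osat$ ample on $\agnsat$. Given any closed reduced irreducible curve $C\subset\agntor$, its image $\pi(C)$ is either a point or a curve in $\agnsat$. In the first case $\otor|_C=\pi^*(\osat|_{\pi(C)})$ is trivial on $C$, so $(\otor\cdot C)=0$. In the second case the projection formula gives $(\otor\cdot C)=(\osat\cdot\pi_*C)=(\deg(C/\pi(C)))\,(\osat\cdot\pi(C))>0$ since $\osat$ is ample. Either way the intersection number is $\geq 0$, which is exactly nefness.

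The main obstacle is making the bigness argument rigorous: one needs to know that $\pi$ is birational (so that $\dim\agnsat=\dim\agntor$ and the Hilbert polynomial of $\osat$ on $\agnsat$ has the full degree $g(g+1)/2$), and that the identification $\H^0(\agntor,\otork)=\H^0(\agnsat,\osatk)$ holds for all $k$, not just $k=1$. The first point follows because both compactifications contain $\agn$ as a dense open subscheme; the second is a form of the Köcher principle (or can be extracted from~\cite[Theorem V.2.5]{faltings-chai}), so I would cite these rather than reprove them. One should also note that the statement is about the scheme $\agntor$ over $\ZZ[1/N]$, but since nefness and bigness are checked on geometric fibres and the relevant dimension count is uniform, it suffices to argue on $\agntor\times\Spec\CC$, where Proposition~\ref{thm:kawamata-viehweg} can later be applied.
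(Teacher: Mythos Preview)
Your proposal is correct and follows essentially the same approach as the paper: both arguments hinge on the identification $\otor=\pi^*\osat$ with $\osat$ ample and $\pi\colon\agntor\to\agnsat$ proper and birational. The only difference is presentational: the paper cites the general facts that pullbacks of nef line bundles along proper morphisms are nef (\cite[Example 1.4.4(1)]{lazarsfeld-positivity-1}) and pullbacks of big line bundles along birational morphisms are big (\cite[Section 4.5]{kollar-mori}), whereas you unwind these facts by hand via the projection formula for curves and the section-count $\H^0(\agntor,\otork)=\H^0(\agnsat,\osatk)$.
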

\begin{proof}
  The line bundle $\osat$ on $\agnsat$ is ample~\cite[Theorem
  V.2.5(1)]{faltings-chai}.  There is a canonical 
  morphism~\cite[Theorem V.2.5(2)]{faltings-chai}
  \begin{equation*}
    \pi\colon\agntor\longto\agnsat
  \end{equation*}
  obtained as the normalization of the blow-up of $\agnsat$ along a certain ideal
  sheaf~\cite[Theorem V.5.8]{faltings-chai}.  Therefore $\pi$ is surjective,
  proper and birational.

  Since $\osat$ is ample, it is nef and big.  Since pullbacks of nef line
  bundles along proper morphisms are nef~\cite[Example
  1.4.4(1)]{lazarsfeld-positivity-1}, we know that $\otor=\pi^*\osat$ is a nef
  line bundle on $\agntor$.  Similarly, since pullbacks of big line bundles
  along birational morphisms are big~\cite[Section 4.5]{kollar-mori}, we know
  that $\otor=\pi^*\osat$ is a big line bundle on $\agntor$.
\end{proof}

\begin{theorem}
  \label{thm:vanishing}
  Suppose the characteristic of the base field $\FF$ is zero, or positive
  $\geq g(g+1)/2$ and not dividing the level $N\geq 3$.
  If $k\geq g+2$, then
  \begin{equation*}
    \H^q\left(\agntor\times\Spec\FF, (\otor)^{\otimes k}(-D)\right)=0\qquad\text{for all }q>0.
  \end{equation*}
\end{theorem}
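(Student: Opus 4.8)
The plan is to separate the characteristic-zero case from the positive-characteristic case, since the two require genuinely different tools. In characteristic zero, the strategy is to apply the Kawamata--Viehweg vanishing theorem (Theorem~\ref{thm:kawamata-viehweg}) to the smooth projective variety $X=\agntor\times\Spec\FF$. For this I must express $(\otor)^{\otimes k}(-D)$ in the form $\omega_X\otimes\mathcal{L}$ with $\mathcal{L}$ nef and big. The key input is the well-known identity for the canonical sheaf of a toroidal compactification, $\omega_X\cong(\otor)^{\otimes(g+1)}(-D)$ (this follows from the fact that $\otor^{\otimes(g+1)}$ is the determinant of the log cotangent bundle, see Faltings--Chai). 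Granting this, one has
\begin{equation*}
  (\otor)^{\otimes k}(-D)\cong\omega_X\otimes(\otor)^{\otimes(k-g-1)},
\end{equation*}
so I need $(\otor)^{\otimes(k-g-1)}$ to be nef and big. By Proposition~\ref{prop:otor_nef_big}, $\otor$ is nef and big; a positive tensor power of a nef and big line bundle is again nef and big; and the hypothesis $k\geq g+2$ guarantees the exponent $k-g-1$ is at least $1$, so this power is a genuine positive power. Kawamata--Viehweg then gives the vanishing for all $q>0$.

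In positive characteristic $p\geq g(g+1)/2$ (not dividing $N$), Kawamata--Viehweg is unavailable, so the argument must instead invoke the results of Stroh and Lan--Suh cited in the introduction, which are precisely vanishing theorems for the cohomology of the line bundle of cusp forms on a toroidal compactification. Concretely, $(\otor)^{\otimes k}(-D)=\etor^{\det^{\otimes k}}(-D)$ is the line bundle whose global sections are the scalar-valued cusp forms of weight $k$, and \cite[Theorem 4.1]{lan-suh} (or \cite[Th\'eor\`eme 1.1]{stroh-lifting} in low degree) asserts the vanishing of all higher cohomology of this bundle under exactly the stated numerical hypotheses $p\geq\dim X=g(g+1)/2$ and $k\geq g+2$ (strictly positive parallel weight). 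So in this case the theorem is essentially a restatement, after unwinding the identification between the line bundle $(\otor)^{\otimes k}(-D)$ and the automorphic vector bundle attached to the parallel weight $(k,\ldots,k)$ with the cusp condition.

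The main obstacle is making the characteristic-zero reduction airtight: one must be certain that the canonical sheaf of the chosen smooth projective toroidal compactification really is $(\otor)^{\otimes(g+1)}(-D)$, which hinges on the simple-normal-crossings hypothesis on $D$ (so that the log structure behaves well) and on the explicit local description of $\agntor$ near the boundary in terms of torus embeddings. I would cite Faltings--Chai for this identity rather than reprove it. A secondary subtlety is confirming that the Lan--Suh hypothesis "$p\geq\dim X$" matches "$p\geq g(g+1)/2$" on the nose and that their "strictly positive parallel cohomological weight" corresponds under the relevant normalization to $k\geq g+2$; this is a bookkeeping matter about weight conventions that should be stated carefully but presents no real difficulty.
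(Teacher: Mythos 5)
Your argument matches the paper's own proof essentially exactly: in characteristic zero it applies Kawamata--Viehweg using the canonical-sheaf identity $\omega_X\cong(\otor)^{\otimes(g+1)}(-D)$ together with Proposition~\ref{prop:otor_nef_big} to make $(\otor)^{\otimes(k-g-1)}$ nef and big, and in positive characteristic it invokes the vanishing theorem of Lan--Suh. The only difference is that you spell out the decomposition $(\otor)^{\otimes k}(-D)\cong\omega_X\otimes(\otor)^{\otimes(k-g-1)}$ and the exponent bookkeeping more explicitly than the paper does, which is fine.
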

\begin{proof}
  In characteristic zero, the vanishing follows from~\ref{prop:otor_nef_big}
  and the Kawamata-Viehweg Vanishing Theorem~\ref{thm:kawamata-viehweg},
  seeing as the canonical sheaf of $\agntor$ is $\otor^{\otimes g+1}(-D)$.
  
  In positive characteristic, the vanishing theorems of Kodaira and
  Kawamata-Viehweg do not hold in general.  However, for $\otor$ on $\agntor$, the
  vanishing is a special case of~\cite[Theorem 4.1]{lan-suh}.
\end{proof}

\section{Analysis of the boundary on toroidal and Satake compactifications}

Let $D$ denote the boundary divisor of a toroidal compactification $\agntor$;
let $\Delta$ denote the boundary of the Satake compactification $\agnsat$.  It
follows from~\cite[Theorem V.2.7]{faltings-chai} that $D$ is the
scheme-theoretic preimage of $\Delta$ under the morphism $\pi$; in other
words, the following is a fibre diagram:

  \begin{center}
  \begin{tikzpicture}[description/.style={fill=white, inner sep=2pt}]
    \matrix (m) [matrix of math nodes, row sep=3em, column sep=3em,
    text height=1.5ex, text depth=0.5ex]
    { D & {\agntor}\\
    {\Delta} & {\agnsat}\\};
    \path[->, font=\scriptsize]
    (m-1-1) edge node[auto] {$i$} (m-1-2); 
    \path[->, font=\scriptsize]
    (m-2-1) edge node[auto] {$j$} (m-2-2);
    \path[->, font=\scriptsize]
    (m-1-2) edge node[auto] {$\pi$} (m-2-2);
    \path[->, font=\scriptsize]
    (m-1-1) edge node[auto] {$\pi|_D$} (m-2-1);
  \end{tikzpicture}
  \end{center}

The following relative vanishing result was proved independently by
Andreatta-Iovita-Pilloni~(\cite[Proposition
8.2.2.4]{andreatta-iovita-pilloni}) and Stroh~(\cite[Th\'eor\`eme
1]{stroh-relative}).  Stroh's proof is very short and more general, as it uses
fewer specific properties of Siegel modular varieties, but only works if the
characteristic is at least $g(g+1)/2$.  The proof in Andreatta-Iovita-Pilloni
works in arbitrary characteristic, but it is based on a more intricate analysis of
the behaviour of the morphism $\pi$ at the boundary.
\begin{theorem}[Andreatta-Iovita-Pilloni, Stroh]
  \label{thm:vanishing_relative}
  Let $\FF$ be a field of arbitrary characteristic.  For all $q>0$ we have
  \begin{equation*}
    \R^q \pi_* \oh_{\agntor}(-D)=0\qquad
    \text{as a sheaf on }\agnsat\times\Spec\FF.
  \end{equation*}
\end{theorem}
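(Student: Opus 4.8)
The plan is to deduce the statement from a relative Grauert--Riemenschneider--type vanishing for the proper birational morphism $\pi\colon\agntor\to\agnsat$. First I would use the facts already recorded above: $\pi$ is proper and birational, $\otor=\pi^*\osat$ with $\osat$ ample (in particular invertible), and $\agntor$ is smooth with canonical sheaf $\omega_{\agntor}=\otor^{\otimes(g+1)}(-D)$. The projection formula then gives
\begin{equation*}
  \R^q\pi_*\omega_{\agntor}=\R^q\pi_*\!\left(\otor^{\otimes(g+1)}(-D)\right)=\osat^{\otimes(g+1)}\otimes\R^q\pi_*\oh_{\agntor}(-D),
\end{equation*}
so, since $\osat$ is invertible, the theorem is equivalent to the vanishing $\R^q\pi_*\omega_{\agntor}=0$ for all $q>0$.

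In characteristic zero this last vanishing is exactly relative Grauert--Riemenschneider for the birational morphism $\pi$ out of a smooth variety; equivalently, it is the relative form of the Kawamata--Viehweg Theorem~\ref{thm:kawamata-viehweg} applied to $\oh_{\agntor}$, which is $\pi$-nef and (as $\pi$ is birational) $\pi$-big. The same argument would carry over when the characteristic $p$ of $\FF$ exceeds $\dim\agntor=g(g+1)/2$: since $\agntor$ spreads out over $\ZZ[1/N]$ it lifts to the ring $W_2(\FF)$ of length-two Witt vectors, and the Deligne--Illusie method then supplies the Kodaira-- and Kawamata--Viehweg--type vanishing needed. This is essentially Stroh's approach, and it already gives Theorem~\ref{thm:vanishing_relative} in the range $p\geq g(g+1)/2$; notice that it uses almost nothing specific about Siegel modular varieties beyond smoothness of $\agntor$ and the factorization $\otor=\pi^*\osat$.

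For a prime $p<g(g+1)/2$ these positivity inputs are unavailable, and the strategy would instead be to work locally on $\agnsat$. Since $\R^q\pi_*\oh_{\agntor}(-D)$ is coherent, it suffices to show that its completed stalks vanish, which by the theorem on formal functions reduces to the vanishing of $\H^q$ of the formal completion of $\oh_{\agntor}(-D)$ along each fibre of $\pi$. As $\pi$ is an isomorphism over $\agn$, only the fibres over the boundary $\Delta$ need attention, and here I would invoke the Ash--Mumford--Rapoport--Tai description of $\agntor$ near the boundary: \'etale-locally (or after a finite cover) it is a relative torus embedding over an abelian scheme, $\pi$ contracts the torus-embedding directions, and $\oh_{\agntor}(-D)$ restricts on those directions to the toric dualizing sheaf $\oh\!\left(-\sum_\rho D_\rho\right)$. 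One then combines the higher-cohomology vanishing for proper toric morphisms---which is purely combinatorial and hence valid in every characteristic---with the vanishing of the contribution of the abelian directions. I expect this last point to be the main obstacle: making the local boundary picture precise and controlling the interplay of the toric and abelian layers of the fibres in a way that is insensitive to the characteristic. This is the delicate analysis carried out in~\cite{andreatta-iovita-pilloni}.
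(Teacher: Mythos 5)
The paper does not give its own proof of Theorem~\ref{thm:vanishing_relative}: it is imported as an external input, with the two cited references described only at a high level (Stroh's short argument valid for $p\geq g(g+1)/2$, versus Andreatta--Iovita--Pilloni's characteristic-free but more involved boundary analysis). Your sketch is consistent with, and usefully fleshes out, that description: the reduction via the projection formula and the identity $\omega_{\agntor}=\otor^{\otimes(g+1)}(-D)=\pi^*\osat^{\otimes(g+1)}(-D)$ to the statement $\R^q\pi_*\omega_{\agntor}=0$ is correct and clean, the characteristic-zero case is indeed relative Grauert--Riemenschneider (equivalently relative Kawamata--Viehweg for $\oh_{\agntor}$, which is $\pi$-nef and $\pi$-big since $\pi$ is birational), the $W_2$-lifting/Deligne--Illusie route matches Stroh's strategy, and the reduction to formal functions plus the AMRT local toric model of $\pi$ over the boundary is the right shape for Andreatta--Iovita--Pilloni's argument. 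Two small caveats worth flagging: Deligne--Illusie needs a genuinely \emph{relative} form and the bound on the characteristic (the usual statement requires $\dim<p$, whereas the paper quotes $p\geq g(g+1)/2$), so the passage from the absolute to the relative setting and the precise inequality both deserve care; and in the small-characteristic case the ``toric plus abelian'' local analysis is, as you say, exactly the delicate part, and your sketch correctly identifies it as the point where one must actually open up the AIP machinery rather than just wave at toric vanishing.
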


\begin{theorem}
  \label{thm:ideal_pushforward}
  If the characteristic of the base field is zero, or positive not dividing
  the level $N$, then 
  \begin{equation*}
    \pi_*\mathcal{I}_D\cong\mathcal{I}_\Delta.
  \end{equation*}
\end{theorem}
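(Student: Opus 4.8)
The plan is to push the defining short exact sequence of $\mathcal{I}_D$ forward along $\pi$ and compare the result with the defining sequence of $\mathcal{I}_\Delta$; the point is that $\mathcal{I}_D=\oh_{\agntor}(-D)$ is precisely the sheaf for which Theorem~\ref{thm:vanishing_relative} supplies relative vanishing. I would work first over $\ZZ[1/N]$, where all the geometry of Faltings--Chai is available, and reduce to the base field at the end; the hypothesis that the characteristic does not divide $N$ is exactly what guarantees that the inputs used below --- normality of $\agnsat$, reducedness of $\Delta$, the cartesian square $D=\pi^{-1}(\Delta)$, and the vanishing of $\R^1\pi_*$ --- all survive that reduction.

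First I would gather the inputs. As in the proof of Proposition~\ref{prop:otor_nef_big}, $\pi\colon\agntor\longto\agnsat$ is proper, surjective and birational; since $\agnsat$ is normal this yields $\pi_*\oh_{\agntor}=\oh_{\agnsat}$. By the cartesian square recalled above (\cite[Theorem V.2.7]{faltings-chai}), $\mathcal{I}_D$ is the ideal sheaf generated by $\pi^{-1}\mathcal{I}_\Delta$; in particular the pullback of a local section of $\mathcal{I}_\Delta$ lies in $\mathcal{I}_D$, so, via $\pi_*\oh_{\agntor}=\oh_{\agnsat}$, the sheaf $\mathcal{I}_\Delta$ is naturally a subsheaf of $\pi_*\mathcal{I}_D$ inside $\oh_{\agnsat}$.

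Next, apply $\pi_*$ to $0\to\mathcal{I}_D\to\oh_{\agntor}\to\oh_D\to 0$. Since $\R^1\pi_*\mathcal{I}_D=\R^1\pi_*\oh_{\agntor}(-D)=0$ by Theorem~\ref{thm:vanishing_relative}, this gives a short exact sequence
\begin{equation*}
0\longto\pi_*\mathcal{I}_D\longto\oh_{\agnsat}\longto(\pi|_D)_*\oh_D\longto 0.
\end{equation*}
Because $\mathcal{I}_\Delta\subseteq\pi_*\mathcal{I}_D$, the surjection $\oh_{\agnsat}\to(\pi|_D)_*\oh_D$ annihilates $\mathcal{I}_\Delta$ and so factors through $\oh_\Delta$; placing the resulting map next to $0\to\mathcal{I}_\Delta\to\oh_{\agnsat}\to\oh_\Delta\to 0$ produces a ladder of short exact sequences with the identity in the middle column. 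By the snake lemma, the inclusion $\mathcal{I}_\Delta\into\pi_*\mathcal{I}_D$ is an isomorphism if and only if the induced map $\oh_\Delta\to(\pi|_D)_*\oh_D$ is injective. The latter holds: $\pi|_D\colon D\to\Delta$ is surjective, being the base change of $\pi$, so a local section $s$ of $\oh_\Delta$ that pulls back to zero on $D$ must vanish at every point of $\Delta$ (pick $y\in D$ over $x\in\Delta$ and use that $\kappa(x)\into\kappa(y)$), and since $\Delta$ is reduced this forces $s=0$. Hence $\pi_*\mathcal{I}_D\cong\mathcal{I}_\Delta$.

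The cohomological content is thus very short once the geometric facts are in place; the step I expect to require real care is confirming that normality of $\agnsat$, reducedness of $\Delta$, the identity $D=\pi^{-1}(\Delta)$, and the relative vanishing $\R^1\pi_*\oh_{\agntor}(-D)=0$ are all preserved upon base change from $\ZZ[1/N]$ to a field of residue characteristic not dividing $N$ --- that is, the point at which the hypothesis on the characteristic, and the base-change machinery of Section~3 in its relative form, are invoked.
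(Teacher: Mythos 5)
Your proof is correct, but follows a genuinely different route from the paper's. Both arguments push the ideal-sheaf exact sequence forward along $\pi$ and invoke the relative vanishing $\R^1\pi_*\mathcal{I}_D=0$ (Theorem~\ref{thm:vanishing_relative}) to obtain a short exact sequence $0\to\pi_*\mathcal{I}_D\to\oh_{\agnsat}\to(\pi|_D)_*\oh_D\to 0$; the difference lies in how the right-hand term is compared with $\oh_\Delta$. The paper applies the Stein factorisation machinery to the restricted morphism $D\to\PP^n$ and concludes $(\pi|_D)_*\oh_D\cong\oh_\Delta$ by uniqueness, producing a ladder of short exact sequences with two vertical isomorphisms that force the third. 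You instead extract from the cartesian square $D=\pi^{-1}(\Delta)$ only the (a priori weaker) inclusion $\mathcal{I}_\Delta\hookrightarrow\pi_*\mathcal{I}_D$, and a snake-lemma argument reduces the claim to the injectivity of $\oh_\Delta\to(\pi|_D)_*\oh_D$, which you check on points using surjectivity of $\pi|_D$ and reducedness of $\Delta$. This is more elementary than the paper's Stein-factorisation step and does not require identifying the Stein factorisation of the boundary morphism; the small cost is the appeal to $\Delta$ being reduced, which is true (the boundary strata of $\agnsat$ are Siegel varieties of smaller degree) but deserves an explicit citation. One minor over-complication in your write-up: since the theorem is stated over a base field, the Faltings--Chai facts (normality of $\agnsat$, the fibre square $D=\pi^{-1}(\Delta)$) and Theorem~\ref{thm:vanishing_relative} all apply directly over that field, so there is no need to work over $\ZZ[1/N]$ and then argue that the inputs descend.
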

\begin{proof}
For some $m\in\NN$, the invertible sheaf $\otor^{\otimes m}$ is generated by
its global sections~\cite[Proposition V.2.1]{faltings-chai}.  This gives a
proper morphism $\agntor\to\PP^n$ into some projective space.  The Stein
factorisation of this morphism (see~\cite[Section III.4.3]{EGA3}) gives
\begin{center}
  \begin{tikzpicture}[description/.style={fill=white, inner sep=2pt}]
    \matrix (m) [matrix of math nodes, row sep=3em, column sep=3em,
    text height=1.5ex, text depth=0.5ex]
    { {\agntor} & {\PP^n}\\
    {\agnsat}\\};
    \path[->, font=\scriptsize]
    (m-1-1) edge node[auto] {$\pi$} (m-2-1);
    \path[->, font=\scriptsize]
    (m-1-1) edge node[auto] {} (m-1-2);
    \path[->, font=\scriptsize]
    (m-2-1) edge node[auto] {} (m-1-2);
  \end{tikzpicture}
\end{center}
which defines both $\agnsat$ and the proper morphism $\pi$.  Consider the
enlarged diagram
\begin{center}
  \begin{tikzpicture}[description/.style={fill=white, inner sep=2pt}]
    \matrix (m) [matrix of math nodes, row sep=3em, column sep=3em,
    text height=1.5ex, text depth=0.5ex]
    { D & {\agntor} & {\PP^n}\\
    {\Delta} & {\agnsat}\\};
    \path[->, font=\scriptsize]
    (m-1-1) edge node[auto] {$i$} (m-1-2); 
    \path[->, font=\scriptsize]
    (m-2-1) edge node[auto] {$j$} (m-2-2);
    \path[->, font=\scriptsize]
    (m-1-2) edge node[auto] {$\pi$} (m-2-2);
    \path[->, font=\scriptsize]
    (m-1-1) edge node[auto] {$\pi|_D$} (m-2-1);
    \path[->, font=\scriptsize]
    (m-1-2) edge node[auto] {} (m-1-3);
    \path[->, font=\scriptsize]
    (m-2-2) edge node[auto] {} (m-1-3);
  \end{tikzpicture}
\end{center}
where the right square is a fibre diagram.  The morphism $\pi|_D$ is the
base change of the proper morphism $\pi$, hence it is proper~\cite[Corollary
II.4.8]{hartshorne-algebraic-geometry}. The two horizontal maps $i$ and $j$
are closed immersions; in particular $i$ is proper~\cite[Corollary
II.4.8]{hartshorne-algebraic-geometry} and $j$ is finite~\cite[Exercise
II.5.5]{hartshorne-algebraic-geometry}.  Since the composition of proper
morphisms is proper, and the composition of finite morphisms is finite, we can
ignore most of the diagram and focus on the triangle
\begin{center}
  \begin{tikzpicture}[description/.style={fill=white, inner sep=2pt}]
    \matrix (m) [matrix of math nodes, row sep=3em, column sep=3em,
    text height=1.5ex, text depth=0.5ex]
    { D & {\PP^n}\\
    {\Delta}\\};
    \path[->, font=\scriptsize]
    (m-1-1) edge node[auto] {$\pi|_D$} (m-2-1);
    \path[->, font=\scriptsize]
    (m-1-1) edge node[auto] {} (m-1-2);
    \path[->, font=\scriptsize]
    (m-2-1) edge node[auto] {} (m-1-2);
  \end{tikzpicture}
\end{center}

By uniqueness, this is the Stein factorisation of the proper morphism
$D\to\PP^n$ (up to an automorphism of $\PP^n$).  In particular
\begin{equation*}
  \left(\pi|_D\right)_*\oh_D\cong\oh_\Delta.
\end{equation*}

Consider the defining short exact sequence for the ideal sheaf
$\mathcal{I}_D$:
\begin{equation*}
  0\longto \mathcal{I}_D\longto \oh_{\agntor}\longto i_*\oh_D\longto 0.
\end{equation*}
We can take higher direct images $\R^\bullet\pi_*$ to get a long exact sequence
of $\oh_{\agnsat}$-modules starting with
\begin{equation*}
  0\longto \pi_*\mathcal{I}_D\longto \pi_*\oh_{\agntor}\longto
  \pi_* i_*\oh_D\longto \R^1\pi_*\mathcal{I}_D.
\end{equation*}

According to Theorem~\ref{thm:vanishing_relative}, the sheaf $\R^1\pi_*\mathcal{I}_D$
is zero.  We get a diagram of $\oh_{\agnsat}$-modules
\begin{center}
  \begin{tikzpicture}[description/.style={fill=white, inner sep=2pt}]
    \matrix (m) [matrix of math nodes, row sep=3em, column sep=3em,
    text height=1.5ex, text depth=0.5ex]
    { 0 & {\pi_*\mathcal{I}_D} & {\pi_*\oh_{\agntor}} & {\pi_* i_* \oh_D} & 0\\
    0 & {\mathcal{I}_\Delta} & {\oh_{\agnsat}} & {j_*\oh_\Delta} & 0\\};
    \path[->, font=\scriptsize]
    (m-1-1) edge node[auto] {} (m-1-2); 
    \path[->, font=\scriptsize]
    (m-1-2) edge node[auto] {} (m-1-3); 
    \path[->, font=\scriptsize]
    (m-1-3) edge node[auto] {} (m-1-4); 
    \path[->, font=\scriptsize]
    (m-1-4) edge node[auto] {} (m-1-5); 
    \path[->, font=\scriptsize]
    (m-2-1) edge node[auto] {} (m-2-2); 
    \path[->, font=\scriptsize]
    (m-2-2) edge node[auto] {} (m-2-3); 
    \path[->, font=\scriptsize]
    (m-2-3) edge node[auto] {} (m-2-4); 
    \path[->, font=\scriptsize]
    (m-2-4) edge node[auto] {} (m-2-5); 
    \path[->, font=\scriptsize]
    (m-1-2) edge node[auto] {} (m-2-2); 
    \path[->, font=\scriptsize]
    (m-1-3) edge node[auto] {$\cong$} (m-2-3); 
    \path[->, font=\scriptsize]
    (m-1-4) edge node[auto] {$\cong$} (m-2-4); 
  \end{tikzpicture}
\end{center}
where the middle vertical arrow is an isomorphism (by the properties of the
Stein factorisation), and the right vertical arrow is an isomorphism:
\begin{equation*}
  \pi_* i_* \oh_D = j_*\left(\left(\pi|_D\right)_* \oh_D\right)
  \cong j_*\oh_\Delta.
\end{equation*}

We conclude that $\pi_* \mathcal{I}_D\cong\mathcal{I}_\Delta$.
\end{proof}

\begin{lemma}[{\cite[Exercise III.8.1]{hartshorne-algebraic-geometry}}]
  \label{lem:relative}
Suppose $\pi\colon X\to Y$ is a continuous map of topological spaces and
$\mathcal{F}$ is a sheaf of abelian groups on $X$ such that
\begin{equation*}
  \R^q f_*(\mathcal{F})=0\qquad\text{for all }q>0.
\end{equation*}
Then there are natural isomorphisms
\begin{equation*}
  \H^q(X,\mathcal{F})=H^q(Y,f_*\mathcal{F})\qquad\text{for all }q\geq 0.
\end{equation*}
\end{lemma}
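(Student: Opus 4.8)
The plan is to show that applying $f_*$ to an injective resolution of $\mathcal{F}$ produces an injective resolution of $f_*\mathcal{F}$, and that taking global sections of the latter recovers the complex computing $\H^\bullet(X,\mathcal{F})$.

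First I would fix an injective resolution $0\longto\mathcal{F}\longto\mathcal{I}^\bullet$ in the category of sheaves of abelian groups on $X$, so that by definition $\H^q(X,\mathcal{F})=\H^q\big(\Gamma(X,\mathcal{I}^\bullet)\big)$. The key input is that $f_*$ carries injective sheaves to injective sheaves: this holds because $f_*$ is right adjoint to the inverse image functor $f^{-1}$, and $f^{-1}$ is exact, and a right adjoint of an exact functor preserves injectives. Granting this, each $f_*\mathcal{I}^j$ is injective on $Y$. Now, by the very definition of the higher direct images, the degree-$q$ cohomology sheaf of the complex $f_*\mathcal{I}^\bullet$ is $\R^q f_*\mathcal{F}$; the hypothesis forces this to vanish for $q>0$, while left-exactness of $f_*$ identifies the kernel of $f_*\mathcal{I}^0\longto f_*\mathcal{I}^1$ with $f_*\mathcal{F}$. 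Hence
\[
  0\longto f_*\mathcal{F}\longto f_*\mathcal{I}^\bullet
\]
is an injective resolution of $f_*\mathcal{F}$ on $Y$, and therefore $\H^q(Y,f_*\mathcal{F})=\H^q\big(\Gamma(Y,f_*\mathcal{I}^\bullet)\big)$.

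It then remains to compare the two complexes of global sections. For any sheaf $\mathcal{G}$ on $X$ one has $\Gamma(Y,f_*\mathcal{G})=(f_*\mathcal{G})(Y)=\mathcal{G}(f^{-1}(Y))=\mathcal{G}(X)=\Gamma(X,\mathcal{G})$, naturally in $\mathcal{G}$; applying this to $\mathcal{G}=\mathcal{I}^\bullet$ gives an equality of complexes $\Gamma(Y,f_*\mathcal{I}^\bullet)=\Gamma(X,\mathcal{I}^\bullet)$, whence $\H^q(Y,f_*\mathcal{F})=\H^q(X,\mathcal{F})$ for all $q\geq 0$. Naturality of these isomorphisms follows from the functoriality of injective resolutions. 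The only ingredient that is not purely formal is the preservation of injectives by $f_*$, so that is where I would take the most care; alternatively the statement drops out in one line from the Leray (Grothendieck) spectral sequence $\H^p(Y,\R^q f_*\mathcal{F})\Rightarrow\H^{p+q}(X,\mathcal{F})$, which collapses onto the row $q=0$ under the hypothesis.
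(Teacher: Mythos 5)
Your proposal is correct and essentially matches the paper's intent: the paper gives no argument beyond citing Hartshorne Exercise III.8.1 and remarking that this is a degenerate Leray spectral sequence, and your detailed injective-resolution argument (using that $f_*$, being right adjoint to the exact $f^{-1}$, preserves injectives, so the vanishing hypothesis makes $f_*\mathcal{I}^\bullet$ an injective resolution of $f_*\mathcal{F}$ with $\Gamma(Y,f_*\mathcal{I}^\bullet)=\Gamma(X,\mathcal{I}^\bullet)$) is precisely the standard solution to that exercise. You also explicitly note the Leray shortcut the paper alludes to, so both routes are covered.
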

(This is a degenerate case of the Leray spectral sequence.)

\begin{theorem}
  \label{thm:koecher_higher}
  Suppose the characteristic of the base field $\FF$ is zero, or positive not dividing
  the level $N\geq 3$.
  For any $k\geq 0$ and any $q\geq 0$ we have
  \begin{equation*}
    \H^q\left(\agntor\times\Spec\FF, \otork(-D)\right) =
    \H^q\left(\agnsat\times\Spec\FF, \osatk\otimes\mathcal{I}_\Delta\right).
  \end{equation*}
\end{theorem}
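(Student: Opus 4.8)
The plan is to deduce the statement entirely from the two relative results already available, namely the vanishing of $\R^q\pi_*\oh_{\agntor}(-D)$ (Theorem~\ref{thm:vanishing_relative}) and the identification $\pi_*\mathcal{I}_D\cong\mathcal{I}_\Delta$ (Theorem~\ref{thm:ideal_pushforward}), by transporting them across the projection formula and then feeding the outcome into the degenerate Leray spectral sequence of Lemma~\ref{lem:relative}. Throughout I work over $\Spec\FF$, so that all schemes and sheaves are the base changes of their counterparts over $\ZZ[1/N]$; the characteristic hypothesis on $\FF$ is exactly the one needed for Theorem~\ref{thm:ideal_pushforward}.

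First I would record the identity $\otor=\pi^*\osat$, which is built into the construction of $\pi$ in Faltings--Chai and was already used in the proof of Proposition~\ref{prop:otor_nef_big}. Taking $k$-th tensor powers gives $\otork=\pi^*\osatk$, and since $\oh_{\agntor}(-D)=\mathcal{I}_D$ we obtain an isomorphism of coherent sheaves on $\agntor\times\Spec\FF$:
\begin{equation*}
  \otork(-D)\cong\pi^*\osatk\otimes\mathcal{I}_D.
\end{equation*}
Because $\osatk$ is an \emph{invertible} sheaf on $\agnsat\times\Spec\FF$, the projection formula for higher direct images applies to $\mathcal{F}=\mathcal{I}_D$ twisted by $\pi^*\osatk$, yielding, for every $q\geq 0$,
\begin{equation*}
  \R^q\pi_*\!\left(\otork(-D)\right)\cong\R^q\pi_*\!\left(\pi^*\osatk\otimes\mathcal{I}_D\right)\cong\osatk\otimes\R^q\pi_*\mathcal{I}_D.
\end{equation*}
For $q>0$ the right-hand side vanishes by Theorem~\ref{thm:vanishing_relative} (applied to $\mathcal{I}_D=\oh_{\agntor}(-D)$), while for $q=0$ it equals $\osatk\otimes\pi_*\mathcal{I}_D\cong\osatk\otimes\mathcal{I}_\Delta$ by Theorem~\ref{thm:ideal_pushforward}. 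Setting $\mathcal{F}=\otork(-D)$, we have therefore verified the hypothesis $\R^q\pi_*\mathcal{F}=0$ for all $q>0$ of Lemma~\ref{lem:relative}, which then produces natural isomorphisms
\begin{equation*}
  \H^q\!\left(\agntor\times\Spec\FF,\otork(-D)\right)\cong\H^q\!\left(\agnsat\times\Spec\FF,\pi_*\otork(-D)\right)\cong\H^q\!\left(\agnsat\times\Spec\FF,\osatk\otimes\mathcal{I}_\Delta\right)
\end{equation*}
for all $q\geq 0$, which is the assertion.

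The only step that requires genuine care --- the main obstacle --- is the invocation of the projection formula: $\agnsat$ is singular and $\mathcal{I}_D$ is merely coherent (not locally free), so one cannot appeal to the usual locally-free version on the source. What rescues the argument is that the twisting sheaf $\osatk$ is locally free of rank one on the \emph{target}, and for an invertible sheaf $\mathcal{L}$ on $\agnsat$ the isomorphism $\R^q\pi_*(\mathcal{F}\otimes\pi^*\mathcal{L})\cong(\R^q\pi_*\mathcal{F})\otimes\mathcal{L}$ holds for an arbitrary quasi-coherent $\mathcal{F}$, as one checks locally on $\agnsat$ over an open set where $\mathcal{L}$ is trivial. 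Once this is in place the remainder of the proof is a formal chain of the two relative theorems and Leray.
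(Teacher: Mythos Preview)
Your proof is correct and follows essentially the same route as the paper's own argument: use $\otor=\pi^*\osat$ and the projection formula to rewrite $\R^q\pi_*(\otork(-D))$ as $\osatk\otimes\R^q\pi_*\mathcal{I}_D$, invoke Theorem~\ref{thm:vanishing_relative} for the vanishing in positive degree, then apply Lemma~\ref{lem:relative} and identify the pushforward via Theorem~\ref{thm:ideal_pushforward}. Your added paragraph justifying the projection formula (locally free on the target suffices) is a welcome clarification but not a departure from the paper's strategy.
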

\begin{proof}
  According to Theorem~\ref{thm:vanishing_relative}, we have
  \begin{equation*}
    \R^q \pi_*\left(\oh_{\agntor}(-D)\right)=0\qquad\text{for all }q>0.
  \end{equation*}

  Using the projection formula~\cite[Proposition 0.12.2.3]{EGA3}, we see that
  \begin{align*}
    \R^q \pi_*\left(\otork(-D)\right)
    &= \R^q \pi_*\left(\oh_{\agntor}(-D)\otimes\pi^*\osatk\right)\\ 
    &= \R^q \pi_*\left(\oh_{\agntor}(-D)\right)\otimes\osatk\\
    &= 0.
  \end{align*}

  We conclude that
  \begin{equation*}
    \H^q\left(\agntor, \otork(-D)\right) =
    \H^q\left(\agnsat, \pi_*\left(\otork(-D)\right)\right) =
    \H^q\left(\agnsat, \osatk\otimes\mathcal{I}_\Delta\right),
  \end{equation*}
  where the first equality comes from Lemma~\ref{lem:relative} and the second
  equality from Theorem~\ref{thm:ideal_pushforward}.
\end{proof}


The following result\footnote{Grothendieck attributes to Serre this \emph{astuce} of
reducing to the case of an irreducible space, see~\cite[page
29]{grothendieck-serre}.} is well-known as part of the proof of Grothendieck's
cohomological dimension theorem, see the original~\cite[Th\'eor\`eme
3.6.5]{grothendieck-tohoku} or the presentation in~\cite[Theorem
III.2.7]{hartshorne-algebraic-geometry}:
\begin{lemma}
  \label{lem:serre_astuce}
  Let $X$ be a topological space with finitely many irreducible components.
  Let $\mathcal{F}$ be a sheaf of abelian groups on $X$, and let $q\geq 0$.
  If $\H^q(Y, \mathcal{F}|_Y)=0$ for all irreducible components $Y$ of $X$,
  then $\H^q(X, \mathcal{F})=0$.
\end{lemma}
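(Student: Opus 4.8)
The plan is to reduce the vanishing statement on $X$ to the analogous statement on each irreducible component by an induction on the number of irreducible components, using the long exact sequence in cohomology attached to a well-chosen short exact sequence of sheaves. First I would dispose of the trivial case: if $X$ has a single irreducible component $Y$, then as topological spaces $X$ and $Y$ have the same underlying reduced structure; more precisely, since cohomology of an abelian sheaf depends only on the underlying topological space, and the hypothesis already gives $\H^q(X,\mathcal{F})=\H^q(Y,\mathcal{F}|_Y)=0$, there is nothing to prove. So assume $X$ has $n\geq 2$ irreducible components $Y_1,\ldots,Y_n$, and set $Y=Y_1$ and $Z=Y_2\cup\cdots\cup Y_n$, each viewed as a closed subspace of $X$ with $X=Y\cup Z$.

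The key step is the Mayer--Vietoris-type short exact sequence of sheaves on $X$. Writing $\iota_Y\colon Y\to X$ and $\iota_Z\colon Z\to X$ for the inclusions, I would use the exact sequence
\begin{equation*}
  0\longto \mathcal{F}\longto (\iota_Y)_*\left(\mathcal{F}|_Y\right)\oplus(\iota_Z)_*\left(\mathcal{F}|_Z\right)\longto (\iota_{Y\cap Z})_*\left(\mathcal{F}|_{Y\cap Z}\right)\longto 0,
\end{equation*}
which one checks is exact on stalks (the map out of $\mathcal{F}$ is the diagonal of the two restriction maps, and the second map is the difference of restrictions; exactness at each point is a statement about whether that point lies in $Y$, in $Z$, or in both). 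Since $Y$, $Z$ and $Y\cap Z$ are closed, the pushforwards are exact and $\H^q(X,(\iota_?)_*(\mathcal{F}|_?))=\H^q(?,\mathcal{F}|_?)$ for each of the three subspaces. The associated long exact sequence then reads, in the relevant range,
\begin{equation*}
  \H^{q-1}(Y\cap Z,\mathcal{F}|_{Y\cap Z})\longto \H^q(X,\mathcal{F})\longto \H^q(Y,\mathcal{F}|_Y)\oplus\H^q(Z,\mathcal{F}|_Z).
\end{equation*}

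To conclude I would induct on $n$. The irreducible components of $Z$ are $Y_2,\ldots,Y_n$, so by the inductive hypothesis $\H^q(Z,\mathcal{F}|_Z)=0$; the irreducible components of $Y$ are just $Y$ itself, so $\H^q(Y,\mathcal{F}|_Y)=0$ by hypothesis. The subtle point, and the one I expect to be the main obstacle, is the term $\H^{q-1}(Y\cap Z,\mathcal{F}|_{Y\cap Z})$ on the left: $Y\cap Z$ need not be irreducible, and its irreducible components are not among the $Y_i$. This is exactly where Serre's \emph{astuce} enters: one wants to arrange the argument so that $Y\cap Z$ is handled by a further induction, e.g. by inducting not on $n$ alone but with an outer induction on $q$ (treating $q=0$ separately, where $\H^0$ of a sheaf injects into the product of $\H^0$ over the components so the statement is elementary) so that the lower cohomology $\H^{q-1}$ is already under control, or alternatively by invoking the statement in the form in which it appears in \cite[Theorem III.2.7]{hartshorne-algebraic-geometry}, where the Noetherian induction is set up to make precisely this intersection term vanish. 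Once that left-hand term is killed, exactness forces $\H^q(X,\mathcal{F})=0$, completing the induction.
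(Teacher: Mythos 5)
You have correctly identified the obstacle — the term $\H^{q-1}(Y\cap Z,\mathcal{F}|_{Y\cap Z})$ — but your two proposed ways around it do not work. An outer induction on $q$ fails because the irreducible components of $Y\cap Z$ are new spaces about which the lemma's hypothesis says nothing; to apply the inductive hypothesis at level $q-1$ to $Y\cap Z$ you would need to know $\H^{q-1}(W,\mathcal{F}|_W)=0$ for every component $W$ of $Y\cap Z$, and these $W$ are not among the $Y_i$. The appeal to \cite[Theorem III.2.7]{hartshorne-algebraic-geometry} also does not rescue the argument: the astuce there is carried out for Grothendieck's cohomological dimension theorem, whose inductive hypothesis is vanishing for \emph{every} abelian sheaf on the irreducible space $Y$. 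Concretely, after writing $0\longto j_!(\mathcal{F}|_U)\longto\mathcal{F}\longto i_*(\mathcal{F}|_Z)\longto 0$ with $U=X\setminus Z$ open and dense in $Y$, one kills $\H^q(X,j_!(\mathcal{F}|_U))=\H^q\bigl(Y,(j_!\mathcal{F}|_U)|_Y\bigr)$ by invoking the theorem for the sheaf $(j_!\mathcal{F}|_U)|_Y$ on $Y$ — which is \emph{not} $\mathcal{F}|_Y$. The lemma as stated, which only assumes vanishing for the single sheaf $\mathcal{F}|_Y$ in the single degree $q$, does not supply that.

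In fact, the statement as written is false. Take $X$ to be two copies of $\PP^1$ glued along two distinct points (a cycle of two $\PP^1$'s), $\mathcal{F}=\oh_X$, and $q=1$. For each component $Y_i$ there is a short exact sequence of abelian sheaves on $Y_i$
\begin{equation*}
  0\longto\mathcal{K}_i\longto i_i^{-1}\oh_X\longto\oh_{Y_i}\longto 0
\end{equation*}
with $\mathcal{K}_i$ a flasque sheaf concentrated at the two intersection points, so $\H^1(Y_i,\mathcal{F}|_{Y_i})=0$; but $X$ has arithmetic genus $1$, so $\H^1(X,\oh_X)\cong k\neq 0$. The correct formulation of Serre's astuce requires the vanishing hypothesis for a larger class of sheaves on each irreducible component (as in Grothendieck's theorem), or equivalently a hypothesis that also controls the cohomology of $\mathcal{F}$ on all the iterated intersections of components. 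In the paper's application (Theorem~\ref{thm:vanishing_satake}) that extra control is available — the intersections of the boundary strata of $\agnsat$ are again Satake compactifications of lower genus, and the induction on $g$ kills their cohomology — but the lemma itself needs the stronger hypothesis made explicit.
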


\begin{theorem}
  \label{thm:vanishing_satake}
  Suppose the characteristic of the base field $\FF$ is zero, or positive
  $\geq g(g+1)/2$ and not dividing
  the level $N\geq 3$.  For all $k\geq g+2$ and $q > 0$, we have
  \begin{equation*}
    \H^q\left(\agnsat\times\Spec\FF, \osatk\right)=0.
  \end{equation*}
\end{theorem}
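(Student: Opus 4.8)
\section*{Proof proposal}

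The plan is to reduce the statement on $\agnsat$ to the same statement on the boundary, which is a union of Satake compactifications of degree $g-1$, and then to run an induction on $g$. Throughout, every scheme is understood to be base-changed to $\Spec\FF$. The starting point is the defining short exact sequence of the boundary $\Delta=\agnsat-\agn$, twisted by $\osatk$:
\[
  0 \longto \osatk\otimes\mathcal{I}_\Delta \longto \osatk \longto \osatk\otimes\oh_\Delta \longto 0,
\]
where $\oh_\Delta=\oh_{\agnsat}/\mathcal{I}_\Delta$, so that $\osatk\otimes\oh_\Delta$ is (the pushforward of) the restriction $\osatk|_\Delta$. Taking cohomology gives the exact pieces
\[
  \H^q\bigl(\agnsat,\osatk\otimes\mathcal{I}_\Delta\bigr) \longto \H^q\bigl(\agnsat,\osatk\bigr) \longto \H^q\bigl(\Delta,\osatk|_\Delta\bigr) \longto \H^{q+1}\bigl(\agnsat,\osatk\otimes\mathcal{I}_\Delta\bigr).
\]
By Theorem~\ref{thm:koecher_higher} we have $\H^q(\agnsat,\osatk\otimes\mathcal{I}_\Delta)\cong\H^q(\agntor,\otork(-D))$ for all $q\geq 0$, and by Theorem~\ref{thm:vanishing} (this is exactly where the hypotheses $k\geq g+2$ and $\operatorname{char}\FF=0$ or $\geq g(g+1)/2$ are used) these groups vanish for $q>0$. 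Hence, for every $q\geq 1$, both outer terms above vanish and we obtain an isomorphism $\H^q(\agnsat,\osatk)\cong\H^q(\Delta,\osatk|_\Delta)$. It therefore suffices to prove that $\H^q(\Delta,\osatk|_\Delta)=0$ for all $q>0$. (As a pleasant by-product, the $q=0$ part of the same sequence, together with the vanishing of $\H^1(\osatk\otimes\mathcal{I}_\Delta)$, shows that restriction of modular forms to the boundary is surjective, which is the $\Phi$-operator statement of the introduction.)

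Next I would identify the boundary. Using the structure of the Satake compactification (\cite[Theorem V.2.5]{faltings-chai} together with the classical stratification $\agsat=\bigsqcup_{g'\le g}\mathscr{A}_{g'}$ of its points by smaller Siegel varieties), the reduced boundary $\Delta_{\mathrm{red}}$ is a finite union of closed subschemes, each isomorphic to a Satake compactification $\mathscr{A}_{g-1,N}^{\mathrm{Sat}}$ of a degree-$(g-1)$ Siegel modular variety with the same level $N$; along a degree-$(g-1)$ stratum the Hodge bundle of the universal $g$-dimensional abelian variety degenerates as the Hodge bundle in degree $g-1$ plus a trivial line, so its determinant $\osat$ restricts on each such component to the corresponding Hodge line bundle. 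The pairwise intersections of these components lie in boundary strata of degree $\leq g-2$. Applying Lemma~\ref{lem:serre_astuce} then reduces the vanishing of $\H^q(\Delta,\osatk|_\Delta)$ to the vanishing of $\H^q\bigl(\mathscr{A}_{g-1,N}^{\mathrm{Sat}},\osatk\bigr)$ on each boundary component.

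Finally I would close the induction on $g$. In the base case $g=1$ the boundary $\Delta$ is a finite set of reduced cusps, a zero-dimensional scheme, so $\H^q(\Delta,\osatk|_\Delta)=0$ for $q>0$ trivially (equivalently one can take the empty base case $g=0$, where $\mathscr{A}_0^{\mathrm{Sat}}$ is a point). For the inductive step the required vanishing $\H^q(\mathscr{A}_{g-1,N}^{\mathrm{Sat}},\osatk)=0$ is precisely the assertion of the theorem in degree $g-1$: the weight bound holds because $k\geq g+2>(g-1)+2$, the characteristic bound holds because $g(g+1)/2\geq (g-1)g/2$, and the level $N\geq 3$ with $p\nmid N$ is unchanged. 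This completes the proof.

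The main obstacle is the second step: making the boundary analysis precise enough that Lemma~\ref{lem:serre_astuce} genuinely applies and feeds the induction. The identification of the boundary components of $\agnsat$ with lower-degree Satake compactifications, and of $\osat$ with the smaller Hodge line bundle, is classical, but one must be careful about (a) the scheme structure of $\Delta$ as cut out by the ideal sheaf $\mathcal{I}_\Delta$ of Theorem~\ref{thm:ideal_pushforward} — if it is non-reduced one filters the nilpotent part, whose graded pieces are again coherent sheaves supported on lower-degree Satake compactifications and so are controlled inductively; (b) keeping track of the level on the boundary strata; and (c) the reduction to irreducible components itself: intersections of components sit in strata of degree $\le g-2$, handled by the inductive hypothesis for $q\ge 2$ and by the surjectivity of the relevant $\H^0$ restriction (again a consequence of $\H^1(\osatk\otimes\mathcal{I}_\Delta)=0$) for $q=1$. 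Everything else is formal manipulation of long exact sequences, already packaged in Theorems~\ref{thm:ideal_pushforward} and \ref{thm:koecher_higher}.
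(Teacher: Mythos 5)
Your proposal matches the paper's proof in all essentials: the same short exact sequence twisting $\mathcal{I}_\Delta$ by $\osatk$, the same invocation of Theorems~\ref{thm:koecher_higher} and~\ref{thm:vanishing} to kill the $\H^q$ of $\osatk\otimes\mathcal{I}_\Delta$ for $q>0$, and the same identification (via \cite[Theorem V.2.5(4)]{faltings-chai}) of the boundary components with $\mathscr{A}_{g-1,N}^{\text{Sat}}$ followed by Lemma~\ref{lem:serre_astuce} and induction on $g$. The only divergence, and it is inconsequential, is at the base case: you note that for $g=1$ the boundary is zero-dimensional so its higher cohomology vanishes trivially (or start at $g=0$), whereas the paper closes the $g=1$ case directly with a Serre duality computation on the modular curve $X(N)$.
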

\begin{proof}
  We base change our spaces to $\FF$ and omit $\Spec\FF$ from the notation,
  for simplicity.

  We proceed by induction on $g$.

  The base case $g=1$ is well-known but worth including.  Here
  $\agnsat=\agntor$ is the modular curve $X(N)$, so the vanishing is clear for
  $q>1$.  The sheaf $\osat$ has positive degree, and so does the effective
  divisor $D$.  By Serre duality, we have
  \begin{equation*}
    \H^1\left(\agnsat, \osat^{\otimes k}\right)=
    \H^0\left(\agnsat, \osat^{\otimes 2-k}(-D)\right)^\vee.
  \end{equation*}
  But if $k\geq g+2=3$, then $2-k<0$ so $\osat^{\otimes 2-k}(-D)$ has negative
  degree, and hence no nonzero global sections. 

  For the induction step, consider the short exact sequence that defines the
  ideal sheaf $\mathcal{I}_\Delta$:
  \begin{equation*}
    0\longto \mathcal{I}_\Delta\longto \oh_{\agnsat}\longto
    j_*\oh_\Delta\longto 0.
  \end{equation*}
  Tensoring with the line bundle $\osatk$ gives another short exact sequence
  \begin{equation}
    \label{eq:ses}
    0\longto\mathcal{I}_\Delta\otimes\osatk\longto \osatk\longto
    \osatk|_\Delta\longto 0.
  \end{equation}
  Applying Theorems~\ref{thm:koecher_higher}, then~\ref{thm:vanishing}, for
  $k\geq g+2$ we have
  \begin{equation*}
    \H^q\left(\agnsat,\mathcal{I}_\Delta\otimes\osatk\right)
    =\H^q\left(\agntor,\mathcal{I}_D\otimes\otork\right)=0.
  \end{equation*}

  The long exact sequence of cohomology associated with~\eqref{eq:ses}
  has pieces of the form
  \begin{equation*}
    \H^q\left(\agnsat,\mathcal{I}_\Delta\otimes\osatk\right)\longto
    \H^q\left(\agnsat,\osatk\right)\longto
    \H^q\left(\agnsat,\osatk|_\Delta\right).
  \end{equation*}
  If $q>0$, we have just seen that the leftmost group is zero, so it
  suffices to prove that the rightmost group is zero.

  Let $C_1,\ldots,C_b$ be the irreducible components of the boundary $\Delta$.
  Each component $C_j$ is isomorphic to $\mathscr{A}_{g-1,N}^\text{Sat}$, with
  \begin{equation*}
    \omega_{\text{Sat},g}|_{C_j} = \omega_{\text{Sat},g-1}
  \end{equation*}
  (by~\cite[Theorem V.2.5(4)]{faltings-chai}). 
    By the induction hypothesis,
  $\H^q(\mathscr{A}_{g-1,N}^\text{Sat},\omega_{\text{Sat},g-1}^{\otimes
  k})=0$.  So each component $C_j$ of $\Delta$ satisfies
  \begin{equation*}
    \H^q(C_j,\osatk|_{C_j})=0.
  \end{equation*}
  Finally, Lemma~\ref{lem:serre_astuce} allows us to conclude that $\H^q(\agnsat,\osatk|_\Delta)=0$. 
\end{proof}

\begin{corollary}
  \label{cor:lifting}
  Let $N\geq 3$.
  Suppose $p\geq g(g+1)/2$ is a prime not dividing $N$.  For all $k\geq g+2$,
  the base change morphism
    \begin{equation*}
      M_k(\Gamma(N))\otimes\fp\longto M_k(\Gamma(N);\fp)
    \end{equation*}
  is an isomorphism.
\end{corollary}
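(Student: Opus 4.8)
The plan is to deduce the corollary from the vanishing theorem just proved by invoking the base-change criterion of Corollary~\ref{cor:cohom_base_change}. First I would set up the geometry: take $A=\ZZ_p$ (or its localization at $p$ inside $\ZZ[1/N]$), a local noetherian ring with residue field $\fp$, and let $X=\agnsat\times\Spec\ZZ_p$, which is a projective scheme over $\Spec A$ because $\osat$ is ample on $\agnsat$. Take $\mathcal{F}=\osatk$; since $\agnsat$ is flat over $\ZZ[1/N]$ and $\osat$ is a line bundle, $\mathcal{F}$ is flat over $\Spec A$. This puts us exactly in the setting of Corollary~\ref{cor:cohom_base_change}.

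Next I would apply part (ii) of that corollary with $q=0$ and $M=\fp$. This requires the vanishing of $\H^{-1}$ (which is automatic, being zero) and of $\H^{1}\left(X\times\Spec\fp,\osatk\right)=\H^1\left(\agnsat\times\Spec\fp,\osatk\right)$. The latter is precisely the $q=1$ case of Theorem~\ref{thm:vanishing_satake}, which applies because $p\geq g(g+1)/2$, $p\nmid N$, and $k\geq g+2$. Therefore the canonical map
\begin{equation*}
  \H^0\left(\agnsat\times\Spec\ZZ_p,\osatk\right)\otimes_{\ZZ_p}\fp
  \longto
  \H^0\left(\agnsat\times\Spec\fp,\osatk\right)
\end{equation*}
is an isomorphism. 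By the K\"ocher principle (for $g>1$; see below for $g=1$), $\H^0\left(\agnsat\times\Spec B,\osatk\right)=M_{\det^{\otimes k}}(N;B)=M_k(\Gamma(N);B)$ for any $\ZZ[1/N]$-algebra $B$, so the displayed isomorphism becomes $M_k(\Gamma(N))\otimes\ZZ_p\otimes_{\ZZ_p}\fp\cong M_k(\Gamma(N);\fp)$, i.e.\ $M_k(\Gamma(N))\otimes\fp\cong M_k(\Gamma(N);\fp)$, which is the claim. (Here I am writing $M_k$ for $M_\rho$ with $\rho=\det^{\otimes k}$, the scalar weight.)

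The one subtlety, and the step I expect to need the most care, is bookkeeping about which space the cohomology is computed on and why global sections over the Satake compactification coincide with spaces of modular forms integrally. The K\"ocher principle as stated in the excerpt gives $\H^0\left(\agnsat,\esat^\rho\otimes B\right)=\H^0\left(\agn,\EE^\rho\otimes B\right)=M_\rho(N;B)$, and for $\rho=\det^{\otimes k}$ the sheaf $\esat^\rho$ is the line bundle $\osatk$, so there is no discrepancy between the coherent-sheaf extension and the honest line bundle. I would also note explicitly that for $g=1$ one has $\agnsat=\agntor=X(N)$ with $\osatk$ an honest line bundle and the base-change argument goes through verbatim (indeed the $g=1$ case of Theorem~\ref{thm:vanishing_satake} supplies $\H^1(X(N)\times\Spec\fp,\osatk)=0$ for $k\geq 3$), though the corollary is only claimed for $N\geq 3$ and $p\geq g(g+1)/2=1$, which is vacuous as a lower bound. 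Finally, I would remark that the hypothesis $p\geq g(g+1)/2$ enters only through Theorem~\ref{thm:vanishing_satake} (via Theorem~\ref{thm:vanishing}, i.e.\ the Lan--Suh vanishing in positive characteristic); everything else is characteristic-free.
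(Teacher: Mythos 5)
Your proposal is correct and takes essentially the same route as the paper: apply Corollary~\ref{cor:cohom_base_change}(ii) with $q=0$, $A=\ZZ_p$, $M=\fp$ and $\mathcal{F}=\osatk$, supply the required vanishing of $\H^1$ from Theorem~\ref{thm:vanishing_satake}, and then convert back to modular forms via K\"ocher's principle (with flat base change from $\ZZ[1/N]$ to $\ZZ_p$ implicit in the identification $M_k(\Gamma(N))\otimes\ZZ_p\cong\H^0(\agnsat\times\Spec\ZZ_p,\osatk)$). Your write-up is somewhat more explicit than the paper's about checking projectivity and flatness hypotheses, which is a reasonable expansion rather than a different argument.
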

\begin{proof}
  Over the local Noetherian ring $\ZZ_p$, Theorem~\ref{thm:vanishing_satake} and
  Corollary~\ref{cor:cohom_base_change} imply that the base change morphism is
  an isomorphism.  By flat base change this implies that
  \begin{equation*}
    \H^0\left(\agnsat,\osatk\right)\otimes_{\ZZ[1/N]}\fp
    \longto
    \H^0\left(\agnsat,\osatk\otimes_{\ZZ[1/N]}\fp\right)
  \end{equation*}
  is an isomorphism.
  The result now follows by K\"ocher's principle.
\end{proof}

See Corollary~\ref{cor:small_level} for an extension of this result to the
small levels $N=1,2$.

\section{Vanishing of vector bundles in characteristic zero}
\label{sect:vector}
We start with a variant of a vanishing theorem for cohomology of
vector bundles, due to Demailly.  We follow Manivel's simplified proof of this
result, as presented in~\cite[Section 7.3.B]{lazarsfeld-positivity-2}.

Given a vector bundle $E$ on a projective scheme $X$, let $\PP(E)$ denote the
projective bundle of $E$ parametrising hyperplane sections in the fibres
$E_x$.  We say that $E$ is \emph{nef} over $X$ if $\oh_{\PP(E)}(1)$ is a nef
line bundle over $\PP(E)$.  We refer the reader to~\cite[Section
6.2.B]{lazarsfeld-positivity-2} for basic properties of nef vector bundles,
and to~\cite[Appendix A]{lazarsfeld-positivity-1} for a short summary of
projective bundles.

\begin{theorem}[Demailly-Manivel]\label{thm:demailly}
  Let $X$ be a smooth projective complex variety.

  Let $E$ be a nef vector bundle of rank $e$ on $X$, and let $L$ be a nef and big line
  bundle on $X$.  
  
  Let $\lambda=(\lambda_1\geq\ldots\geq\lambda_e)\in\ZZ^e$,
  $\lambda_e\geq 0$; let $h=h(\lambda)$ denote the
  number of nonzero parts $\lambda_i$ of $\lambda$, and let $E^\lambda$ be the
  vector bundle associated to the irreducible representation of $\GL_e$ with
  highest weight $\lambda$.  
  
  Then
  \begin{equation*}
    \H^q\left(X,\omega_X\otimes E^\lambda\otimes (\det E)^{\otimes
    h}\otimes L\right)=0\qquad\text{for all }q>0.
  \end{equation*}
\end{theorem}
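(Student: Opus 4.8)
The plan is to deduce the statement from the Kawamata-Viehweg vanishing theorem (Theorem~\ref{thm:kawamata-viehweg}), applied not on $X$ itself but on a flag bundle over $X$, using the relative Borel-Weil-Bott theorem to transport the vanishing back down; this is Manivel's strategy as presented in~\cite[Section 7.3.B]{lazarsfeld-positivity-2}. If $\lambda=0$ then $h=0$ and $E^{\lambda}\otimes(\det E)^{\otimes h}=\mathcal{O}_{X}$, so the assertion is exactly Kawamata-Viehweg for the nef and big line bundle $L$; assume henceforth $h\geq 1$. Let $\pi\colon F\to X$ be the relative flag bundle of $E$ parametrising chains of quotients $\pi^{*}E\twoheadrightarrow U_{h}\twoheadrightarrow\cdots\twoheadrightarrow U_{1}$ with $\operatorname{rk}U_{k}=k$; since $X$ is smooth and projective over $\CC$ and the fibres of $\pi$ are partial flag varieties, $F$ is again a smooth complex projective variety. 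Each $\det U_{k}$ is the determinant of a quotient of the nef bundle $\pi^{*}E$, hence nef (nefness is preserved under pullback, quotient, and determinant, see~\cite[Section 6.2.B]{lazarsfeld-positivity-2}), and a product $\bigotimes_{k}(\det U_{k})^{\otimes c_{k}}$ with all $c_{k}>0$ is ample on each fibre of $\pi$, hence relatively ample.

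By relative Borel-Weil-Bott the line bundle $\mathcal{L}_{\lambda}:=\bigotimes_{k=1}^{h}(\det U_{k})^{\otimes(\lambda_{k}-\lambda_{k+1})}$ on $F$ (with $\lambda_{h+1}:=0$) satisfies $\pi_{*}\mathcal{L}_{\lambda}=E^{\lambda}$ and $\R^{q}\pi_{*}\mathcal{L}_{\lambda}=0$ for $q>0$, since $\lambda$ is dominant and vanishes past the $h$-th coordinate. Hence the projection formula together with the degenerate Leray spectral sequence gives, for every $q$,
\[
  \H^{q}\!\left(X,\omega_{X}\otimes E^{\lambda}\otimes(\det E)^{\otimes h}\otimes L\right)=\H^{q}\!\left(F,\pi^{*}\!\left(\omega_{X}\otimes(\det E)^{\otimes h}\otimes L\right)\otimes\mathcal{L}_{\lambda}\right).
\]
Next I would compute the relative canonical bundle $\omega_{F/X}$, starting from the description of the relative tangent bundle as a direct sum of $\shom$ sheaves between the tautological kernels $\ker(\pi^{*}E\to U_{k})$ and the quotients $U_{k}$; it comes out as a monomial in the $\det U_{k}$ in which $\pi^{*}\det E$ occurs to the power $-h$. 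Substituting $\pi^{*}\omega_{X}=\omega_{F}\otimes\omega_{F/X}^{-1}$ turns the right-hand side above into $\H^{q}(F,\omega_{F}\otimes\mathcal{M}\otimes\pi^{*}L)$, where $\mathcal{M}:=\omega_{F/X}^{-1}\otimes\mathcal{L}_{\lambda}\otimes\pi^{*}(\det E)^{\otimes h}$.

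The crux is the claim that $\mathcal{M}\otimes\pi^{*}L$ is nef and big. The twist by $(\det E)^{\otimes h}$ in the statement is exactly what cancels the $\pi^{*}(\det E)^{\otimes(-h)}$ coming out of $\omega_{F/X}^{-1}$, so that $\mathcal{M}=\bigotimes_{k=1}^{h}(\det U_{k})^{\otimes c_{k}}$ with $c_{k}=2+(\lambda_{k}-\lambda_{k+1})$ for $k<h$ and $c_{h}=e-h+1+\lambda_{h}$; because $\lambda$ is a partition whose last nonzero part is $\lambda_{h}\geq 1$ one gets $c_{k}\geq 2$ throughout, and it is precisely here that the value $h=h(\lambda)$ is forced---any smaller twist would leave a negative power of $\pi^{*}\det E$. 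Thus $\mathcal{M}$ is nef and $\pi$-ample by the first paragraph, so $\mathcal{M}\otimes\pi^{*}L$ is nef (a sum of two nef line bundles); and it is big, because expanding the top self-intersection $(\mathcal{M}\otimes\pi^{*}L)^{\dim F}$ and pushing forward to $X$, the only term that need not vanish, namely $\binom{\dim F}{r}\,\mathcal{M}^{r}\cdot\pi^{*}(L^{\dim X})$ with $r=\dim F-\dim X$, is strictly positive since $\mathcal{M}$ is relatively ample and $L^{\dim X}>0$, while all other terms are $\geq 0$ by nefness---and a nef line bundle with positive top self-intersection is big. Kawamata-Viehweg (Theorem~\ref{thm:kawamata-viehweg}) applied to the smooth complex projective variety $F$ and the nef and big line bundle $\mathcal{M}\otimes\pi^{*}L$ then gives $\H^{q}(F,\omega_{F}\otimes\mathcal{M}\otimes\pi^{*}L)=0$ for all $q>0$, and the displayed identity yields the claim.

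The main obstacle is the middle computation: fixing conventions in the Borel-Weil-Bott realisation of $E^{\lambda}$, computing $\omega_{F/X}$ in closed form, and verifying that after the $(\det E)^{\otimes h}$ twist every exponent $c_{k}$ of $\mathcal{M}$ is positive. Once that is in place, the base change along $\pi$, the nef-and-big bookkeeping, and the appeal to Kawamata-Viehweg are routine.
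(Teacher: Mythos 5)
Your proof is correct but takes a genuinely different route from the paper, and the attribution to Manivel is off. The paper follows Manivel's simplification (from the same reference you cite), which sidesteps flag bundles entirely: set $F=E^{\oplus h}$, observe that $E^\lambda$ is a direct summand of $\Sym^m F$ with $m=|\lambda|$ and that $\det F=(\det E)^{\otimes h}$, and then invoke the Griffiths-type vanishing Theorem~\ref{thm:griffiths}, itself proved by base-changing along the rank-one-quotient projective bundle $\PP(F)$. Your version instead works directly on the partial flag bundle of quotients of ranks $1,\dots,h$, realises $E^\lambda$ as a direct image via relative Borel--Weil, computes $\omega_{F/X}$, and applies Kawamata--Viehweg (Theorem~\ref{thm:kawamata-viehweg}) upstairs; this is closer in spirit to Demailly's original argument, which Manivel's trick was meant to streamline. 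I checked your exponent formulas $c_k=2+(\lambda_k-\lambda_{k+1})$ for $k<h$ and $c_h=e-h+1+\lambda_h$ in the cases $h=1$ and $h=2$ and they come out right, as does your nef-and-big verification for $\mathcal{M}\otimes\pi^*L$. One small sign slip: you first state that $\omega_{F/X}$ carries $\pi^*(\det E)^{-h}$, but in fact $\omega_{F/X}$ carries $\pi^*(\det E)^{+h}$ (compare $\omega_{\PP(E)/X}=\pi^*\det E\otimes U_1^{-e}$ in the rank-one case), so it is $\omega_{F/X}^{-1}$ that supplies $\pi^*(\det E)^{-h}$---which is the statement you then actually use, so the argument is unaffected. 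Manivel's reduction buys a shorter and more elementary proof resting only on symmetric powers and a single projective bundle; your flag-bundle route is more structural but requires precisely the Borel--Weil and relative-canonical bookkeeping that you flag as the main remaining obstacle.
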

\begin{proof}
  By the definition of $h$ we have 
  $\lambda=(\lambda_1\geq\ldots\geq\lambda_h\geq 0\geq\ldots\geq 0)$.
  Let $m=\lambda_1+\ldots+\lambda_h$, and let $F=E\oplus E\oplus\ldots\oplus
  E$, where we take $h$ summands.  Then $F$ is a nef vector bundle on $X$, and
  $\det F=(\det E)^{\otimes h}$.

  We apply Theorem~\ref{thm:griffiths} to $F$ and get
  \begin{equation}\label{eq:grif}
    \H^q\left(X,\omega_X\otimes(\Sym^m F)\otimes (\det F)\otimes L\right)=0
    \qquad\text{for all }q>0.
  \end{equation}
  But
  \begin{equation*}
    \Sym^m F=\bigoplus_{m_1+\ldots+m_h=m} (\Sym^{m_1} E)\otimes\ldots
    \otimes (\Sym^{m_h} E)
  \end{equation*}
  In particular, we have
  \begin{equation*}
    E^\lambda \subset (\Sym^{\lambda_1} E)\otimes \ldots\otimes
    (\Sym^{\lambda_h} E)\subset \Sym^m F,
  \end{equation*}
  where both inclusions are as direct summands.  Therefore~\eqref{eq:grif}
  gives us
  \begin{equation*}
    \H^q\left(X,\omega_X\otimes E^\lambda\otimes (\det E)^{\otimes h}\otimes
    L\right)=0\qquad\text{for all }q>0.
  \end{equation*}
\end{proof}

For completeness, we give a proof of
the following variant of the Griffiths vanishing theorem, which is stated
in~\cite[Example 7.3.3]{lazarsfeld-positivity-2}.

\begin{theorem}[Griffiths]\label{thm:griffiths}
  Let $X$ be a smooth projective complex variety of dimension $n$.

  Let $F$ be a nef vector bundle of rank $r$ on $X$, and let $L$ be a nef and
  big line bundle on $X$.  Then
  \begin{equation*}
    \H^q \left(X,\omega_X\otimes (\Sym^m F)\otimes (\det F)\otimes L\right)
    =0\qquad\text{for all }q>0, m\geq 0.
  \end{equation*}
\end{theorem}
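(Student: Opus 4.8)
The plan is to reduce the statement about the vector bundle $F$ on $X$ to a statement about a single ample (or nef and big) line bundle on a projective bundle over $X$, where Kawamata--Viehweg vanishing applies, and then push the vanishing back down to $X$ via the Leray spectral sequence. Let me set up the notation following Lazarsfeld. Put $P=\PP(F)$, with projection $\pi\colon P\to X$ of relative dimension $r-1$, and let $\xi=\oh_{P}(1)$ be the Serre line bundle, so that by definition $F$ nef means $\xi$ is nef on $P$. The key identities are the projection formula computations $\pi_*\xi^{\otimes m}=\Sym^m F$ for $m\geq 0$ (and $\R^q\pi_*\xi^{\otimes m}=0$ for $q>0$, $m\geq 0$, since the fibres are projective spaces), together with the relative canonical bundle formula $\omega_{P/X}=\xi^{\otimes -r}\otimes\pi^*\det F$. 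Hence $\omega_P=\pi^*\omega_X\otimes\omega_{P/X}=\pi^*\bigl(\omega_X\otimes\det F\bigr)\otimes\xi^{\otimes -r}$.

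First I would assemble the line bundle on $P$ whose cohomology computes the desired group. Consider
\begin{equation*}
  \mathcal{M}=\omega_P\otimes\xi^{\otimes (m+r)}\otimes\pi^*L
  =\pi^*\bigl(\omega_X\otimes\det F\otimes L\bigr)\otimes\xi^{\otimes m}.
\end{equation*}
By the projection formula and the vanishing of $\R^q\pi_*\xi^{\otimes m}$ for $q>0$, the Leray spectral sequence degenerates (Lemma~\ref{lem:relative} applies after twisting) and gives
\begin{equation*}
  \H^q\bigl(P,\mathcal{M}\bigr)
  =\H^q\bigl(X,\omega_X\otimes\det F\otimes L\otimes\pi_*\xi^{\otimes m}\bigr)
  =\H^q\bigl(X,\omega_X\otimes(\Sym^m F)\otimes(\det F)\otimes L\bigr).
\end{equation*}
So it suffices to prove $\H^q(P,\mathcal{M})=0$ for $q>0$, i.e. $\H^q(P,\omega_P\otimes\mathcal{A})=0$ where $\mathcal{A}=\xi^{\otimes(m+r)}\otimes\pi^*L$. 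By Kawamata--Viehweg (Theorem~\ref{thm:kawamata-viehweg}), it is enough to check that $\mathcal{A}$ is nef and big on $P$.

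The nef part is routine: $\xi$ is nef since $F$ is nef, $\pi^*L$ is nef since $L$ is nef and $\pi$ is proper, and nef line bundles are closed under tensor product, so $\mathcal{A}$ is nef (here $m+r\geq r\geq 1$). The bigness is the main obstacle, and the one genuinely delicate point: $\xi$ itself need only be nef, not big, so I cannot simply add bigness of the two factors. The trick is that $\pi^*L$ contributes bigness in the base directions while a positive power of $\xi$ contributes strict positivity in the fibre directions; concretely, one shows $\mathcal{A}^{\otimes\ell}$ has at least $c\ell^{\dim P}$ sections for large $\ell$ by using $\H^0(P,\mathcal{A}^{\otimes\ell})\supseteq\H^0\bigl(X,(\Sym^{\ell(m+r)}F)\otimes L^{\otimes\ell}\bigr)$ and estimating: since $L$ is big, $L^{\otimes\ell}$ has $\gtrsim\ell^n$ sections, and tensoring with the nef bundle $\Sym^{\ell(m+r)}F$ of rank growing polynomially in $\ell$ multiplies the section count by a factor of order $\ell^{\,r-1}$ (this uses that $\Sym^j F$ is globally generated-ish / has many sections because $\xi$ is nef and $\pi^*(\text{ample})\otimes\xi$ is ample, an argument I would take from~\cite[Section 7.3.B]{lazarsfeld-positivity-2}). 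Alternatively, and more cleanly, one invokes the standard fact that $\xi\otimes\pi^*H$ is ample on $P$ for $H$ sufficiently ample on $X$, writes $\pi^*L$ as a limit of such classes since $L$ is big and nef, and concludes that $\xi^{\otimes k}\otimes\pi^*L$ is big for every $k\geq 1$ because it dominates an ample class numerically up to an effective correction. Once bigness of $\mathcal{A}$ is secured, Kawamata--Viehweg finishes the argument and the Leray computation above transports the vanishing to $X$, completing the proof.
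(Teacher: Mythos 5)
Your overall architecture is exactly the paper's: pass to $P=\PP(F)$, use the relative Euler sequence to get $\omega_{P/X}=\pi^*\det F\otimes\xi^{\otimes -r}$, show $\H^q(P,\omega_P\otimes\xi^{\otimes(m+r)}\otimes\pi^*L)\cong\H^q(X,\omega_X\otimes\Sym^m F\otimes\det F\otimes L)$ via the degenerate Leray spectral sequence, and finish by applying Kawamata--Viehweg to $\mathcal{A}=\xi^{\otimes(m+r)}\otimes\pi^*L$ once it is known to be nef and big. The nef part and the reduction are both correct and match the paper. The only point of divergence is the bigness of $\mathcal{A}$, which you rightly identify as the crux.

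For bigness, the paper reduces (by ``nef $+$ nef-and-big is nef-and-big'') to showing $\xi\otimes\pi^*L$ is big, and then computes the top self-intersection $(c_1(\xi)+c_1(\pi^*L))^{n+r-1}$ directly: each term in the binomial expansion is $\geq 0$ because the classes are nef, and the single term $c_1(\pi^*L)^n\cdot c_1(\xi)^{r-1}=\deg(c_1(L)^n)>0$ pins down strict positivity, which by the numerical criterion for bigness of nef classes finishes the step. Your first sketch (counting sections of $\mathcal{A}^{\otimes\ell}$) is essentially this, but stated too loosely to verify as written: ``$\Sym^j F$ is globally generated-ish'' is not something you can use without justification, since a nef bundle can have no sections at all. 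Your second sketch is a genuinely different and clean route: by Kodaira's lemma write $L\equiv_\QQ A+E$ with $A$ ample and $E$ effective, so that $k\xi+\pi^*L\equiv_\QQ(\xi+\pi^*A)+(k-1)\xi+\pi^*E$; then $\xi+\pi^*A$ is ample, $(k-1)\xi$ is nef, and $\pi^*E$ is effective, giving bigness. However, the step ``$\xi\otimes\pi^*A$ is ample'' is not automatic and you should not phrase it as ``$H$ sufficiently ample'': the correct ingredient is the theorem that a nef vector bundle tensored with an ample line bundle is ample (so $F\otimes A$ is ample, hence $\oh_{\PP(F\otimes A)}(1)=\xi\otimes\pi^*A$ is ample), and the phrase ``writes $\pi^*L$ as a limit of such classes'' should be replaced by the Kodaira decomposition of $L$ itself. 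With those two repairs, your alternative route to bigness is correct and avoids the intersection-number computation; as it stands, that step is the one gap in an otherwise sound proposal.
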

\begin{proof}
 The cotangent bundle sequence for $\pi\colon \PP(F)\to X$ gives
 $\omega_{\PP(F)}=\omega_{\PP(F)/X}\otimes\pi^*\omega_X$. The relative Euler sequence
\begin{equation*}
0\longrightarrow \Omega_{\PP(F)/X}^1\longrightarrow \pi^*F\otimes \mathcal{O}_{\PP(F)}(-1)\longrightarrow\mathcal{O}_{\PP(F)}\longrightarrow 0
\end{equation*}
gives $\omega_{\PP(F)/X}=\pi^*\det F\otimes \mathcal{O}_{\PP(F)}(-r)$. Then for any $m\geq 0$,
\begin{align*}
\pi_*(\omega_{\PP(F)}\otimes\mathcal{O}_{\PP(F)}(m+r))&=\pi_*(\pi^*(\omega_X\otimes\det F)\otimes \mathcal{O}_{\PP(F)}(m))\\
 &=\omega_X\otimes\det F\otimes\pi_*(\mathcal{O}_{\PP(F)}(m))\\
&= \omega_X\otimes \det F\otimes \Sym^mF.
\end{align*}
Furthermore, when $m\geq0$  we have 
\begin{equation*}
R^{r-1}\pi_*\mathcal{O}_{\PP(F)}(-r-m)= (\Sym^mF)^*\otimes \det F^*
\end{equation*}
and all other direct image sheaves of this type vanish. By the projection formula
\begin{align*}
R^i\pi_*(\omega_{\PP(F)}\otimes\mathcal{O}_{\PP(F)}(m+r)\otimes\pi^*L)&=R^i\pi_*(\mathcal{O}_{\PP(F)}(m)\otimes \pi^*(\omega_X\otimes\det F\otimes L))\\
&=R^i\pi_*(\mathcal{O}_{\PP(F)}(m))\otimes \omega_X\otimes\det F\otimes L,
\end{align*}
so the higher direct image sheaves of
$\omega_{\PP(F)}\otimes\mathcal{O}_{\PP(F)}(m+r)\otimes\pi^* L$ vanish. 
Therefore
\begin{equation*}
H^i(X,\omega_X\otimes \det F\otimes \Sym^mF\otimes L)=
H^i(\PP(F),\omega_{\PP(F)}\otimes\mathcal{O}_{\PP(F)}(m+r)\otimes\pi^* L).
\end{equation*}

Observe that $\mathcal{O}_{\PP(F)}(1)$ is nef (by definition) and $\pi^*L$ is
also nef (as the pullback of a nef line bundle under a proper, surjective
map).  Hence $\mathcal{O}_{\PP(F)}(m+r)\otimes \pi^*L$ is nef. 

To show that $\mathcal{O}_{\PP(F)}(m+r)\otimes \pi^*L$ is big we use the
fact that a nef divisor is big if and only if its top intersection is strictly
positive~\cite[Theorem 2.2.16]{lazarsfeld-positivity-1}.  Since the sum of a
nef divisor and a nef and big divisor is nef and big (a nef divisor that
is not big lies on an extremal ray of the nef cone) it suffices to show
that $\mathcal{O}_{\PP(F)}(1)\otimes \pi^*L$ is nef and big.

We have
\begin{equation*}
\left(c_1(\mathcal{O}_{\PP(F)}(1))+c_1(\pi^*L)\right)^{n+r-1}
=\sum_{i=0}^{n+r-1}\binom{n+r-1}{i}\,
c_1\!\left(\mathcal{O}_{\PP(F)}(1)\right)^i\cdot
c_1\!\left(\pi^*L\right)^{n+r-1-i}.
\end{equation*}
An ample divisor restricted to any subvariety is ample.  As a result of this
in our situation we have $A_1\cdots A_{n+r-1}>0$ for ample $A_i$.  In the
limit this gives $D_1\cdots D_{n+r-1}\geq 0$ for nef $D_i$.  Hence
$c_1(\mathcal{O}_{\PP(F)}(1))^i\cdot c_1(\pi^* L)^{n+r-1-i}\geq 0$.

It remains to exhibit one non-zero term in the sum.  We know that $\deg
(c_1(L)^n)>0$ as $L$ is nef and big. The fibres of $\PP(F)\to X$ are
isomorphic to $\PP^{r-1}$. Now $\mathcal{O}_{\PP(F)}(1)$ restricted to each
fibre is $\mathcal{O}_{\PP^{r-1}}(1)$ hence 
$c_1(\pi^*L)^n\cdot
c_1(\mathcal{O}_{\PP(F)}(1))^{r-1}
=\deg(c_1(L)^n)>0$.

So we have that $\mathcal{O}_{\PP(F)}(m+r)\otimes\pi^*L$ is nef and big and an
application of Kawamata-Viehweg vanishing gives the result we require.
\end{proof}

Our interest in these vanishing results comes from the fact that the vector
bundle $\etor$ on $\agntor$ is nef.  (See the proof
of~\cite[Corollary~3.2]{moeller-viehweg-zuo}, where this fact is credited to
Kawamata.  Beware that $\etor$ is denoted $F^{1,0}$
in~\cite{moeller-viehweg-zuo}.)

To simplify the statement of some of the following results, we define
what we mean by an element of $\ZZ^g$ to be ``sufficiently large'' with
respect to $g$.  Let
\begin{equation*}
  \mu=(\mu_1+k\geq\ldots\geq\mu_{g-1}+k\geq k)\in\ZZ^g\quad
  \text{where }\mu_{g-1}\geq 0.
\end{equation*}
Let $\lambda=(\mu_1\geq\ldots\geq\mu_{g-1}\geq 0)$ and let $h(\lambda)$ be the number
of nonzero $\mu_i$'s.  If $k\geq g+h(\lambda)+2$, we say that $\mu$ is
``sufficiently large''.

\begin{theorem}
  \label{thm:siegel_vanish}
  Let $g\geq 2$.  If $\mu\in\ZZ^g$ is ``sufficiently large'', then
  \begin{equation*}
    \H^q\left(\agntor, \etor^{\mu}(-D)\right)=0\qquad\text{for all }q>0.
  \end{equation*}
\end{theorem}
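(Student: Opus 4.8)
The plan is to deduce this from the Demailly--Manivel vanishing theorem (Theorem~\ref{thm:demailly}), applied over $\CC$ to the Hodge bundle $E=\etor$ on $X=\agntor$ --- which is nef, as recalled above --- together with a suitable power of the nef and big line bundle $\otor$ (Proposition~\ref{prop:otor_nef_big}). The first step is to untwist the weight. Writing $\lambda=(\mu_1\geq\cdots\geq\mu_{g-1}\geq 0)\in\ZZ^g$ (so $\lambda$ has a trailing zero, and $h(\lambda)$ equals the number of nonzero parts of $\lambda$), we have $\mu=\lambda+(k,\dots,k)$. Since the irreducible representation of $\GL_g$ of highest weight $(k,\dots,k)$ is $(\det)^{\otimes k}$, and since the formation of $\etor^{(-)}$ is multiplicative in the representation (it is defined by applying the representation to the transition functions of $\etor$), this yields $\etor^\mu=\etor^\lambda\otimes\otork$ on $\agntor$, hence
\begin{equation*}
  \etor^\mu(-D)=\etor^\lambda\otimes\otork(-D).
\end{equation*}

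The second step is pure bookkeeping: one splits off a copy of the canonical sheaf $\omega_{\agntor}$, which (as recalled in the proof of Theorem~\ref{thm:vanishing}) equals $\otor^{\otimes(g+1)}(-D)$. Setting $h=h(\lambda)$, the hypothesis that $\mu$ is ``sufficiently large'' is precisely $k\geq g+h+2$, i.e. $k-g-1-h\geq 1$. Therefore
\begin{equation*}
  \etor^\mu(-D)=\omega_{\agntor}\otimes\etor^\lambda\otimes(\det\etor)^{\otimes h}\otimes L,\qquad L:=\otor^{\otimes(k-g-1-h)},
\end{equation*}
and $L$ is nef and big, being a strictly positive tensor power of the nef and big line bundle $\otor$.

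Finally, I would apply Theorem~\ref{thm:demailly} with $E=\etor$ (nef, of rank $e=g$), with the weight $\lambda\in\ZZ^g$ (whose last entry is $0$ and whose number of nonzero parts is $h$, exactly as the theorem requires), and with the nef and big line bundle $L$; its conclusion is precisely $\H^q(\agntor,\etor^\mu(-D))=0$ for all $q>0$. The one point that needs genuine care --- and it is a matter of bookkeeping rather than a real obstacle --- is checking that the decomposition $\etor^\mu=\etor^\lambda\otimes\otork$ holds for the canonical extensions on $\agntor$, not merely for the corresponding bundles on $\agn$; this is immediate from the functorial (transition-function) definition of the canonical extension. The remaining work is just verifying that the numerology of ``sufficiently large'' matches the positivity threshold $k-g-1-h\geq 1$ needed to make $L$ big, which is how the definition was arranged.
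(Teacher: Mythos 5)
Your proposal is correct and follows essentially the same route as the paper's proof: it writes $\etor^\mu=\etor^\lambda\otimes\otork$, splits off the canonical sheaf $\omega_{\agntor}=\otor^{\otimes(g+1)}(-D)$, and applies Theorem~\ref{thm:demailly} with $E=\etor$ and $L=\otor^{\otimes(k-g-1-h)}$, which is exactly the paper's choice $L=\otor^{\otimes j}$ with $j=k-g-h(\lambda)-1>0$.
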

\begin{proof}
  The canonical bundle of $X=\agntor$
  is~\cite[Section VI.4]{faltings-chai}
  \begin{equation*}
    \omega_X=(\otor)^{\otimes g+1}(-D).
  \end{equation*}

  Let $j=k-g-h(\lambda)-1>0$.  Note that $\etor^\mu=\etor^\lambda\otimes
  (\otor)^{\otimes k}$.

  We apply Theorem~\ref{thm:demailly} with $E=\etor$ and $L=(\otor)^{\otimes
  j}$ and get
  \begin{align*}
    \H^q\left(\agntor, \etor^{\mu}(-D)\right)&=
    \H^q\left(\agntor, \etor^{\lambda}\otimes (\otor)^{\otimes k}(-D)\right)\\
    &=\H^q\left(\agntor,(\otor)^{\otimes g+1}(-D)\otimes \etor^\lambda\otimes
    (\otor)^{\otimes h(\lambda)}\otimes (\otor)^{\otimes j}\right)\\
    &=0.
  \end{align*}
\end{proof}

We record two special cases of interest.  First, note that the case of
highest weight $\mu=(k\geq\ldots\geq k)\in\ZZ^g$ gives precisely the vanishing result
for scalar-valued forms which constitutes the characteristic zero part of
Theorem~\ref{thm:vanishing}.  The second special case is that of symmetric
powers, corresponding to highest weight $\mu=(j+k\geq k\geq\ldots\geq k)\in\ZZ^g$:
\begin{corollary}[Symmetric powers]
  \label{cor:sym_vanish}
  If $j\geq 1$ and $k\geq g+3$, then
  \begin{equation*}
    \H^q\left(\agntor, \Sym^j(\etor)\otimes(\otor)^{\otimes
    k}(-D)\right)=0\qquad\text{for all }q>0.
  \end{equation*}
\end{corollary}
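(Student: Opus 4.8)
The plan is to apply Theorem~\ref{thm:siegel_vanish} to the highest weight vector $\mu=(j+k\geq k\geq\ldots\geq k)\in\ZZ^g$, the case flagged in the discussion preceding the statement. First I would check that this $\mu$ is ``sufficiently large'' in the sense of the definition above: writing $\mu=\lambda+(k,\ldots,k)$ with $\lambda=(j,0,\ldots,0)$, the number of nonzero parts is $h(\lambda)=1$ since $j\geq 1$, so the threshold $k\geq g+h(\lambda)+2$ becomes $k\geq g+3$, which is precisely our hypothesis (and implicitly $g\geq 2$, as in Theorem~\ref{thm:siegel_vanish}).

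Next I would identify the sheaf in question. The irreducible representation of $\GL_g$ with highest weight $(j,0,\ldots,0)$ is $\Sym^j$ of the standard representation, so applying it to the transition functions of $\etor$ gives $\etor^\lambda=\Sym^j(\etor)$, while the $\det^k$ part of $\mu$ contributes a factor $(\otor)^{\otimes k}$; thus $\etor^\mu=\Sym^j(\etor)\otimes(\otor)^{\otimes k}$, exactly as in the computation inside the proof of Theorem~\ref{thm:siegel_vanish}. Substituting into that theorem yields
\[
\H^q\left(\agntor, \Sym^j(\etor)\otimes(\otor)^{\otimes k}(-D)\right)=\H^q\left(\agntor, \etor^{\mu}(-D)\right)=0\qquad\text{for all }q>0,
\]
which is the desired vanishing.

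Alternatively, and more directly, one can bypass Demailly--Manivel and invoke only the Griffiths vanishing Theorem~\ref{thm:griffiths}: apply it with $F=\etor$, $m=j$ and $L=(\otor)^{\otimes(k-g-2)}$, which is nef and big precisely when $k\geq g+3$ since $\otor$ is nef and big, and then use $\omega_{\agntor}=(\otor)^{\otimes g+1}(-D)$ together with $\det\etor=\otor$ to recognise the resulting group as $\H^q(\agntor,\Sym^j(\etor)\otimes(\otor)^{\otimes k}(-D))$. Either way there is no genuine obstacle: the entire content of the statement is already contained in Theorems~\ref{thm:siegel_vanish} and~\ref{thm:griffiths}, and all that remains is the routine bookkeeping with highest weights that makes the ``sufficiently large'' threshold collapse to $g+3$.
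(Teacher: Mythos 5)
Your primary argument—identifying $\Sym^j(\etor)\otimes(\otor)^{\otimes k}$ with $\etor^\mu$ for $\mu=(j+k,k,\ldots,k)$, noting $h(\lambda)=1$ since $j\geq1$, and invoking Theorem~\ref{thm:siegel_vanish}—is exactly the paper's intended derivation, and the bookkeeping making the threshold collapse to $k\geq g+3$ is correct. Your alternative route via Theorem~\ref{thm:griffiths} with $F=\etor$, $m=j$, $L=(\otor)^{\otimes(k-g-2)}$ is also valid, but it is simply the Demailly--Manivel argument unwound in the case $h=1$, where $F=E^{\oplus h}$ collapses to $E$ itself, so it is not really an independent proof.
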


\section{Surjectivity of the Siegel operators}
\label{sect:surjectivity}
Let $\rho$ be an irreducible representation of the algebraic group $\GL_g$
with highest weight vector $\lambda=(\lambda_1,\ldots,\lambda_g)$. If
$\Delta=\agsat-\ag$ then the inclusion $i\colon\Delta\hookrightarrow \agsat$ gives
\begin{equation*}
\H^0\left(\agsat,\esat^\rho\otimes i_*\oh_\Delta\otimes
B\right)=\H^0\left(\mathscr{A}_{g-1}^{\text{Sat}},\esat^{\rho^\prime}\otimes
B\right),
\end{equation*}
where $\rho^\prime$ is the irreducible representation of $\GL_{g-1}$ with
highest weight vector $\lambda^\prime=(\lambda_1,...,\lambda_{g-1})$.

The map that takes a section of $\H^0(\agsat,\esat^\rho\otimes B)$ to its
restriction as a section in
$\H^0(\mathscr{A}_{g-1}^{\text{Sat}},\esat^{\rho^\prime}\otimes B)$ is known as the
Siegel $\Phi$-operator. When $B=\CC$, this operator
\begin{equation*}
\Phi:M_\rho^g(1,\CC)\longrightarrow M_{\rho^\prime}^{g-1}(1,\CC)
\end{equation*}
is realised as
\begin{equation*}
(\Phi f)(\tau^\prime)=\lim_{t\to\infty} f\begin{pmatrix}it&0\\0&\tau^\prime   \end{pmatrix}
\end{equation*}
for $\tau^\prime\in\HH_{g-1}$, $t\in \RR$. 


Weissauer shows~\cite[Korollar zum Satz 8, p. 87]{weissauer} that $\Phi$ is
surjective for even $k\geq g+2$.
In the vector-valued case with $g=2$, $\mu=(j+k,k)$ it was proved by Arakawa to be surjective for $j\geq 1$ and $k\geq 5$ in~\cite[Proposition 1.3]{arakawa}.  
Weissauer and Arakawa's proofs are analytic in nature and involve showing certain integrals representing an averaging process converge. 

The Siegel $\Phi$-operator generalises to higher levels as the restriction of
global sections to the boundary of the Siegel variety.  If
$\Delta=\agnsat-\agn$ denotes the boundary of the Satake compactification,
then the inclusion $i\colon\Delta\hookrightarrow\agnsat$ gives the operator
\begin{equation*}
  \phisat\colon\H^0\left(\agnsat, \esat^\rho\otimes B\right)\longto
  \H^0\left(\agnsat,\esat^\rho\otimes i_*\oh_\Delta\otimes B\right).
\end{equation*}

Similarly, if $D=\agntor-\agn$ denotes the boundary divisor of the toroidal
compactification, then the inclusion $j\colon D\hookrightarrow\agntor$ gives
the operator
\begin{equation*}
  \phitor\colon\H^0\left(\agntor, \etor^\rho\otimes B\right)\longto
  \H^0\left(\agntor,\etor^\rho\otimes j_*\oh_D\otimes B\right).
\end{equation*}

We investigate conditions under which these operators are surjective.

Consider the ideal sheaf of $j\colon D\into\agntor$, defined by the short exact
sequence
\begin{equation*}
  0\longto \mathcal{I}_D \longto \oh_{\agntor} \longto j_* \oh_D
  \longto 0.
\end{equation*}

Since $\etor^\rho$ is locally free, tensoring by
$\etor^\rho$ gives a short exact sequence
\begin{equation}
\label{eqn:ses3}
  0\longto\etor^\rho\otimes\mathcal{I}_D \longto \etor^\rho\longto
  \etor^\rho\Big|_D\longto 0.
\end{equation}
We get a long exact sequence in cohomology that features the toroidal operator
\begin{equation*}
  0\longto S_\mu(N)\longto M_\mu(N)\xrightarrow{\,\phitor\,}
  \H^0\left(D, \etor^{\mu}\big|_D\right)\longto
  \H^1\left(\agntor, \etor^\mu\otimes\mathcal{I}_D\right).
\end{equation*}

The
following is a direct consequence of Theorem~\ref{thm:siegel_vanish}.  (For
the definition of ``sufficiently large'', see the paragraph before
Theorem~\ref{thm:siegel_vanish}.)

\begin{theorem}
  \label{thm:surj_zero}
  Let $N\geq 3$.
  Over a field of characteristic zero, we have
  \begin{enumerate}[(i)]
    \item If $\mu$ is ``sufficiently large'', then $\phitor$ on forms of
      degree $g$ and weight
      $\mu$ is surjective.
    \item If $k\geq g+2$, then $\phitor$ on scalar-valued forms of degree $g$
      and weight $k$ is surjective.
    \item If $j\geq 1$ and $k\geq g+3$, then $\phitor$ on forms of degree $g$
      and weight
      $\Sym^j\otimes\det^{\otimes k}$ is surjective.
  \end{enumerate}
\end{theorem}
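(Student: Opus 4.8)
The plan is to extract the surjectivity of $\phitor$ directly from the long exact cohomology sequence attached to the short exact sequence~\eqref{eqn:ses3}. Since $\mathcal{I}_D=\oh_{\agntor}(-D)$ and $\etor^\rho$ is locally free, we have $\etor^\rho\otimes\mathcal{I}_D=\etor^\rho(-D)$, so in the weight-$\mu$ case the relevant stretch of that sequence reads
\begin{equation*}
  M_\mu(N)\xrightarrow{\,\phitor\,}\H^0\left(D,\etor^\mu\big|_D\right)\longto\H^1\left(\agntor,\etor^\mu(-D)\right).
\end{equation*}
Consequently $\phitor$ on forms of degree $g$ and weight $\mu$ is surjective as soon as $\H^1(\agntor,\etor^\mu(-D))=0$, and the theorem becomes a matter of invoking the vanishing results of Section~\ref{sect:vector} with $q=1$ in each of the three weight regimes.

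For part~(i), the hypothesis that $\mu$ is ``sufficiently large'' is precisely the hypothesis of Theorem~\ref{thm:siegel_vanish}, which yields $\H^q(\agntor,\etor^\mu(-D))=0$ for all $q>0$; taking $q=1$ gives the claim. For part~(ii), scalar-valued weight $k$ corresponds to $\mu=(k,\dots,k)$, so $\etor^\mu=\otork$, and for $k\geq g+2$ the characteristic-zero half of Theorem~\ref{thm:vanishing} (equivalently the scalar case $h(\lambda)=0$ of Theorem~\ref{thm:siegel_vanish}) gives $\H^1(\agntor,\otork(-D))=0$. For part~(iii), weight $\Sym^j\otimes\det^{\otimes k}$ corresponds to $\mu=(j+k,k,\dots,k)$, so $\etor^\mu=\Sym^j(\etor)\otimes\otork$, and for $j\geq 1$, $k\geq g+3$ this is exactly Corollary~\ref{cor:sym_vanish}.

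There is no genuine obstacle left: all the geometric work has been front-loaded into the Demailly--Manivel theorem and its corollaries, so the only thing to verify is bookkeeping --- that each of the three weight patterns in the statement translates into the twist $\etor^\lambda\otimes(\otor)^{\otimes k}$ with the value of $h(\lambda)$ that the relevant vanishing theorem expects ($h=g-1$ for a general ``sufficiently large'' $\mu$, $h=0$ in the scalar case, $h=1$ in the symmetric-power case), so that the resulting numerical constraint on $k$ matches the one in the hypothesis. If I wanted to be completely explicit I would also record, as in the paragraph preceding the theorem, that the exactness of~\eqref{eqn:ses3} at $\H^0(D,\etor^\mu|_D)$ is what makes vanishing of $\H^1(\agntor,\etor^\mu(-D))$ force the connecting map to be zero, hence $\phitor$ to be onto.
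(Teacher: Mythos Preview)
Your proposal is correct and follows exactly the paper's own argument: the paper sets up the long exact sequence just before the theorem and then states that the result ``is a direct consequence of Theorem~\ref{thm:siegel_vanish}'', with parts~(ii) and~(iii) being the scalar and symmetric-power special cases recorded immediately after that theorem and in Corollary~\ref{cor:sym_vanish}. One tiny quibble in your bookkeeping parenthetical: the value of $h(\lambda)$ for a ``sufficiently large'' $\mu$ is not fixed at $g-1$ but can be any integer between $0$ and $g-1$ (the constraint $k\geq g+h(\lambda)+2$ adjusts accordingly), though this does not affect the argument since Theorem~\ref{thm:siegel_vanish} already covers all such $h$.
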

Note that part (iii) is the toroidal analogue in level $N\geq 3$ of a result proved by Arakawa in
degree $2$ and level $N=1$ for the Satake compactification, see~\cite[Proposition
1.3]{arakawa}.

In positive characteristic, we can restrict to scalar-valued forms and
appeal to the vanishing theorem of Lan and Suh (Theorem~\ref{thm:vanishing}) to
get:
\begin{theorem}
  \label{thm:surj_pos}
  Let $p\geq g(g+1)/2$ be a prime not dividing the level $N\geq 3$.  If $k\geq g+2$, then
  $\phitor$ on scalar-valued forms (mod $p$) of degree $g$ and weight $k$ is
  surjective.
\end{theorem}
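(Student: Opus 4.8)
The plan is to mimic the argument proving part (ii) of Theorem~\ref{thm:surj_zero}, replacing the characteristic-zero vanishing input by the Lan--Suh vanishing result available in Theorem~\ref{thm:vanishing}. Concretely, I would work over $\FF=\fpbar$ (or any field of characteristic $p$ not dividing $N$), take $\rho=\det^{\otimes k}$ so that $\etor^\rho=\otork$, and study the long exact sequence in cohomology attached to the short exact sequence~\eqref{eqn:ses3}, namely
\begin{equation*}
  0\longto S_k(N;\FF)\longto M_k(N;\FF)\xrightarrow{\,\phitor\,}
  \H^0\left(D,\otork\big|_D\right)\longto
  \H^1\left(\agntor,\otork\otimes\mathcal{I}_D\right)\longto\cdots.
\end{equation*}
Surjectivity of $\phitor$ follows immediately once we know that the term $\H^1(\agntor,\otork\otimes\mathcal{I}_D)$ vanishes, since this group receives the cokernel of $\phitor$.

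The vanishing of this $\H^1$ is exactly what Theorem~\ref{thm:vanishing} provides: we have $\otork\otimes\mathcal{I}_D=\otork(-D)$ because $D$ is an effective Cartier divisor, and Theorem~\ref{thm:vanishing} asserts that $\H^q(\agntor\times\Spec\FF,\otork(-D))=0$ for all $q>0$ whenever $k\geq g+2$, the characteristic is $\geq g(g+1)/2$, and the characteristic does not divide $N\geq 3$. These are precisely the hypotheses of the present theorem. Taking $q=1$ closes the argument. (Strictly speaking one should also confirm that the long exact sequence displayed above is the correct unwinding of~\eqref{eqn:ses3} in this characteristic, using the Köcher principle to identify $\H^0(\agntor,\otork)$ with $M_k(N;\FF)$ and the kernel with the space of cusp forms $S_k(N;\FF)$; this is the same bookkeeping already carried out in Section~\ref{sect:surjectivity} in characteristic zero, and it goes through verbatim.)

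There is essentially no obstacle here beyond assembling the pieces: the entire content is the input vanishing theorem, which has already been black-boxed as Theorem~\ref{thm:vanishing}. The only point requiring a moment's care is that, unlike the characteristic-zero case where one could also invoke the Demailly--Manivel machinery to handle vector-valued forms, in positive characteristic we have no analogue of Theorem~\ref{thm:siegel_vanish}, so the statement must be restricted to scalar weights $\det^{\otimes k}$ — which is why Theorem~\ref{thm:surj_pos} is phrased only for scalar-valued forms. I would therefore present the proof in a single short paragraph: recall~\eqref{eqn:ses3} with $\rho=\det^{\otimes k}$, write down the relevant segment of the long exact cohomology sequence, and cite Theorem~\ref{thm:vanishing} to kill the obstruction group $\H^1(\agntor,\otork(-D))$.
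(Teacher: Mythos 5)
Your proposal is correct and is essentially identical to the paper's (very brief) argument: the paper constructs the long exact sequence from~\eqref{eqn:ses3}, establishes Theorem~\ref{thm:surj_zero} from it in characteristic zero, and then states Theorem~\ref{thm:surj_pos} by simply swapping in the Lan--Suh vanishing result of Theorem~\ref{thm:vanishing} to kill $\H^1\bigl(\agntor,\otork(-D)\bigr)$, which is exactly what you do. Your closing remark explaining why the positive-characteristic statement is restricted to scalar weights (no analogue of Theorem~\ref{thm:siegel_vanish}) also matches the paper's framing.
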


The operator $\phisat$ also fits into a long exact sequence
\begin{equation*}
  0\longto S_k(N)\longto M_k(N)\xrightarrow{\,\phisat\,}
  \H^0\left(\Delta, \osatk\big|_\Delta\right)\longto
  \H^1\left(\agnsat, \osatk\otimes\mathcal{I}_\Delta\right).
\end{equation*}
By appealing to Theorems~\ref{thm:surj_zero}(ii), \ref{thm:surj_pos} and
\ref{thm:koecher_higher}, we obtain
\begin{corollary}\label{cor:phisatp}
  Suppose the characteristic of the base field $\FF$ is zero, or a prime 
  $p\geq g(g+1)/2$ not dividing the level $N\geq 3$.  If $k\geq g+2$, then $\phisat$
  on scalar-valued forms over $\FF$ of degree $g$ and weight $k$ is
  surjective. 
\end{corollary}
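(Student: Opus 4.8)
The plan is to transfer surjectivity of $\phitor$ across the morphism $\pi\colon\agntor\to\agnsat$. For the scalar weight $\rho=\det^{\otimes k}$ the bundle $\etor^\rho$ is $\otork$, so the cohomology long exact sequence of the ideal-sheaf sequence $0\to\otork(-D)\to\otork\to\otork|_D\to 0$ on $\agntor$ contains
\begin{equation*}
  M_k(N)\xrightarrow{\,\phitor\,}\H^0\!\left(D,\otork|_D\right)
  \xrightarrow{\,\delta_{\mathrm{tor}}\,}\H^1\!\left(\agntor,\otork(-D)\right),
\end{equation*}
while the long exact sequence displayed just before the corollary statement contains
\begin{equation*}
  M_k(N)\xrightarrow{\,\phisat\,}\H^0\!\left(\Delta,\osatk|_\Delta\right)
  \xrightarrow{\,\delta_{\mathrm{Sat}}\,}\H^1\!\left(\agnsat,\osatk\otimes\mathcal{I}_\Delta\right).
\end{equation*}
By exactness, $\phitor$ (resp.\ $\phisat$) is surjective precisely when $\delta_{\mathrm{tor}}=0$ (resp.\ $\delta_{\mathrm{Sat}}=0$). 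Since $\phitor$ on scalar-valued forms of weight $k\geq g+2$ is surjective, by Theorem~\ref{thm:surj_zero}(ii) in characteristic zero and by Theorem~\ref{thm:surj_pos} in characteristic $p\geq g(g+1)/2$, it is enough to show that $\delta_{\mathrm{tor}}=0$ implies $\delta_{\mathrm{Sat}}=0$, that is, to match the two connecting maps.

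First I would push the toroidal sequence forward along $\pi$. Using $\otork=\pi^*\osatk$, the projection formula, the identity $\pi_*\oh_{\agntor}=\oh_{\agnsat}$ coming from the Stein factorisation, the computation $\pi_*(\otork(-D))=\osatk\otimes\mathcal{I}_\Delta$ of Theorem~\ref{thm:ideal_pushforward}, and the relative vanishing $\R^1\pi_*(\otork(-D))=0$ of Theorem~\ref{thm:vanishing_relative}, one checks that $\pi_*$ sends the toroidal sequence to a short exact sequence which is exactly the Satake sequence $0\to\osatk\otimes\mathcal{I}_\Delta\to\osatk\to j_*(\osatk|_\Delta)\to 0$. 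The Leray edge maps $\H^q(\agntor,\mathcal{F})\to\H^q(\agnsat,\pi_*\mathcal{F})$ are natural in $\mathcal{F}$, so they assemble into a morphism from the long exact sequence of the toroidal sequence to that of the Satake sequence. On the kernel $\otork(-D)$ these edge maps are isomorphisms in every degree because $\R^{>0}\pi_*(\otork(-D))=0$ (Lemma~\ref{lem:relative}; the degree-one instance is the equality in Theorem~\ref{thm:koecher_higher}), and on $\otork|_D$ the degree-zero edge map is the tautological identification $\H^0(D,\otork|_D)=\H^0(\Delta,\osatk|_\Delta)$. Commutativity of the resulting square carries $\delta_{\mathrm{tor}}$ to $\delta_{\mathrm{Sat}}$, so $\delta_{\mathrm{tor}}=0$ forces $\delta_{\mathrm{Sat}}=0$ and $\phisat$ is surjective.

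The point that needs care is precisely this last compatibility: that the isomorphism of Theorem~\ref{thm:koecher_higher} intertwines the two connecting homomorphisms. It is a routine application of the functoriality of the Leray edge maps, which is available here because the kernel sheaf $\otork(-D)$ has vanishing higher direct images under $\pi$; nonetheless it is the one step worth spelling out. Alternatively one can bypass spectral sequences altogether by arguing on global sections: $\otork=\pi^*\osatk$ and $\pi\circ i=j\circ(\pi|_D)$ give $\phitor(\pi^*s)=(\pi|_D)^*(\phisat(s))$ for every $s\in\H^0(\agnsat,\osatk)=M_k(N)$, while the projection formula and $(\pi|_D)_*\oh_D=\oh_\Delta$ (from the proof of Theorem~\ref{thm:ideal_pushforward}) show that $(\pi|_D)^*\colon\H^0(\Delta,\osatk|_\Delta)\to\H^0(D,\otork|_D)$ is an isomorphism; surjectivity of $\phitor$ then immediately yields surjectivity of $\phisat$.
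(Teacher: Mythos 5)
Your argument is correct, and both routes you offer reach the conclusion; but they differ from the paper's own line, which is shorter.  The paper treats the surjectivity of $\phisat$ entirely via vanishing: the displayed long exact sequence just before the corollary shows that $\phisat$ is surjective as soon as $\H^1\bigl(\agnsat,\osatk\otimes\mathcal{I}_\Delta\bigr)=0$, and Theorem~\ref{thm:koecher_higher} identifies this group with $\H^1\bigl(\agntor,\otork(-D)\bigr)$, which vanishes under the stated hypotheses by Theorem~\ref{thm:vanishing} (the same vanishing that drives Theorems~\ref{thm:surj_zero}(ii) and~\ref{thm:surj_pos}, which is why those are what the paper cites).  So the paper never compares the two operators directly; it only uses that the target of the Satake connecting map is zero.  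You instead take the surjectivity of $\phitor$ as a black box and transfer it across $\pi$, which forces you to match the two connecting homomorphisms.  Your primary argument does this by pushing the toroidal ideal-sheaf sequence forward along $\pi$ and invoking functoriality of the Leray edge maps; this is sound, and the key hypothesis $\R^{>0}\pi_*\bigl(\otork(-D)\bigr)=0$ is exactly Theorem~\ref{thm:vanishing_relative} combined with the projection formula, the same relative vanishing that underlies Theorem~\ref{thm:koecher_higher}, so no new input is needed.  (One small slip of no consequence: the degree-$q$ Leray edge map runs $\H^q(\agnsat,\pi_*\mathcal{F})\to\H^q(\agntor,\mathcal{F})$, the opposite direction from what you wrote; since the relevant vertical arrows are isomorphisms, the conclusion $\delta_{\mathrm{tor}}=0\Rightarrow\delta_{\mathrm{Sat}}=0$ is unaffected.)  Your alternative argument on global sections, using $\phitor(\pi^*s)=(\pi|_D)^*\phisat(s)$ and the isomorphism $(\pi|_D)^*\colon\H^0(\Delta,\osatk|_\Delta)\to\H^0(D,\otork|_D)$, is the cleanest version of the transfer and avoids any discussion of spectral sequences; it additionally needs the K\"ocher principle $\H^0(\agntor,\otork)=\H^0(\agnsat,\osatk)$ to write the preimage under $\phitor$ as $\pi^*s$, which is available.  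Summing up: your argument is a genuinely different organisation of the same ingredients, slightly longer but structurally illuminating because it exhibits the toroidal and Satake $\Phi$-operators as intertwined by $\pi$, whereas the paper simply kills the obstruction group outright.
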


In characteristic zero, this gives an algebraic proof for a result analogous
to~\cite[Korollar zum Satz 8, p. 87]{weissauer}, which was obtained by
Weissauer using analytic methods.  See Theorem~\ref{thm:surjphi} for a
version of Theorem~\ref{thm:surj_pos} and Corollary~\ref{cor:phisatp} in
level $1$, and Theorem~\ref{thm:liftC} for a result about lifting forms of
level $1$ to forms of level $N\geq 3$ in higher degree.

\section{Levels $1$ and $2$}
\label{sect:lowlevel}

If $N\geq 3$, then for any $L\geq 1$ the canonical morphism
\begin{equation*}
  \mathscr{A}_{g,LN}\longto\agn
\end{equation*}
is a finite covering with Galois group $G(g,L):=\GSp_{2g}(\ZZ/L\ZZ)$.
Therefore, given any $\ZZ[1/LN]$-algebra $B$, we have
\begin{align*}
  M_\rho(\Gamma(N);B) &= M_\rho(\Gamma(LN);B)^{G(g,L)}\\
  S_\rho(\Gamma(N);B) &= S_\rho(\Gamma(LN);B)^{G(g,L)}
\end{align*}
We use these formulas to \emph{define} Siegel modular forms and cusp forms of
levels $N=1$ and $2$ and coefficients in $B$.  (This is independent of the
choice of $L$ invertible in $B$.)

If $p$ is a prime not dividing the order of $G(g,L)$, then the ``invariants'' functor
$\zpgmod\to\zpmod$ given by $M\mapsto M^{G(g,L)}$
is exact.  By the Chinese Remainder Theorem,
\begin{equation*}
  \# G(g,L) = \prod_{\ell^a\| L,\ell\text{ prime}} \# G(g,\ell^a).
\end{equation*}
On the other hand, it is known that if $\ell$ is prime
\begin{equation*}
  \# G(g,\ell^a) =
  \left(\ell^a-1\right)\ell^{ag^2}\prod_{i=1}^g\left(\ell^{2ia}-1\right),
\end{equation*}
which in particular shows that $\# G(g,\ell)$ divides $\# G(g,\ell^a)$.  We
conclude that, in order to find $L$ such that a particular prime $p$ does not
divide $\# G(g,L)$, it is sufficient to consider prime numbers $\ell=L$.


\begin{proposition}
  Let $g\geq 1$ be an integer.  Let $p$ be a prime $> 2g+1$.
  There exists a prime number $\ell\geq 3$ such that $p$ does not divide
  \begin{equation*}
    \# G(g,\ell) = \left(\ell-1\right)\ell^{g^2}
    \prod_{i=1}^g \left(\ell^{2i}-1\right).
  \end{equation*}
  Moreover, the inequality $p>2g+1$ is sharp.
\end{proposition}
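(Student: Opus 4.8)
The plan is to exploit the fact that the hypothesis $p>2g+1$ is precisely what is needed for a prime $\ell$ that happens to be a primitive root modulo $p$ to avoid every factor of $\#G(g,\ell)$. First I would record the elementary observation that, since
$\#G(g,\ell)=(\ell-1)\,\ell^{g^2}\prod_{i=1}^{g}(\ell^{2i}-1)$,
a prime $\ell\neq p$ satisfies $p\nmid\#G(g,\ell)$ as soon as the multiplicative order of $\ell$ modulo $p$ exceeds $2g$. Indeed, $\ell\neq p$ gives $p\nmid\ell^{g^2}$; and if $\operatorname{ord}_p(\ell)>2g$, then $\operatorname{ord}_p(\ell)$ divides neither $1$ nor any of $2,4,\dots,2g$, so $p$ divides none of $\ell-1$ or $\ell^{2i}-1$ for $1\le i\le g$. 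As $p$ is prime, it therefore divides none of the factors of $\#G(g,\ell)$.

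Next I would produce such a prime $\ell$. Since $p$ is prime, $(\ZZ/p\ZZ)^{\times}$ is cyclic of order $p-1$; fix a residue class $a \bmod p$ consisting of primitive roots. By Dirichlet's theorem on primes in arithmetic progressions there are infinitely many primes congruent to $a$ modulo $p$; choose one of them, $\ell$, with $\ell>p$, so in particular $\ell\ge 3$ and $\ell\neq p$. Because the order modulo $p$ depends only on the residue class, $\ell$ is a primitive root modulo $p$, hence $\operatorname{ord}_p(\ell)=p-1>2g$ by hypothesis. By the observation above, $p\nmid\#G(g,\ell)$, which proves the first assertion.

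For sharpness, suppose $p=2g+1$ is prime (this occurs for infinitely many $g$, e.g.\ $g=1,2,3,5,6,\dots$) and let $\ell\ge 3$ be an arbitrary prime. If $\ell=p$, then $p\mid\ell^{g^2}$ since $g^2\ge 1$. If $\ell\neq p$, then Fermat's little theorem gives $\ell^{2g}=\ell^{p-1}\equiv 1\pmod p$, so $p$ divides the factor $\ell^{2g}-1$ of $\#G(g,\ell)$. In both cases $p\mid\#G(g,\ell)$, so no admissible $\ell$ exists when $p=2g+1$; thus the inequality $p>2g+1$ cannot be relaxed to $p\ge 2g+1$.

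I do not expect any serious obstacle here: the only external ingredient is Dirichlet's theorem, used to place a prime in the primitive-root residue class. One could avoid it by the more hands-on remark that the primes cannot all lie in the at most $2g$ residue classes modulo $p$ formed by the elements of order $\le 2g$, but invoking Dirichlet keeps the argument short and is almost certainly what is intended.
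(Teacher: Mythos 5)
Your argument for the existence of $\ell$ is essentially the paper's: both take a prime primitive root modulo $p$ and observe that its order $p-1>2g$ prevents $p$ from dividing any factor $\ell-1$, $\ell^{2i}-1$, or $\ell^{g^2}$. You deduce the existence of such a prime from Dirichlet's theorem, while the paper simply cites a reference for the infinitude of prime primitive roots; these amount to the same thing. The one place you diverge is the sharpness claim. You show only that for $g$ with $2g+1$ prime, the conclusion fails at $p=2g+1$, which suffices to say the constant $2g+1$ cannot be lowered uniformly in $g$. The paper proves the stronger statement that for \emph{every} $g\geq 1$ and \emph{every} prime $p\leq 2g+1$, $p$ divides $\#G(g,\ell)$ for all primes $\ell\geq 3$: it first handles $g=1$, then uses the recursion $\#G(g,\ell)=\#G(g-1,\ell)\cdot\ell^{2g-1}(\ell^{2g}-1)$ to push the divisibility up by induction on $g$. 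Your Fermat-little-theorem step is exactly the inductive kernel of that argument, so if you want the stronger form you need only add: for a prime $p\leq 2g+1$, write $p=2g'+1$ with $g'\leq g$, run your argument for $G(g',\ell)$, and note that $\#G(g',\ell)$ divides $\#G(g,\ell)$ via the recursion. As written, your proof is correct but establishes a mildly weaker version of the sharpness claim than the paper does.
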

\begin{proof}
  Let $\ell\geq 3$ be a prime primitive root modulo $p$.  (It is known that
  there are infinitely many such $\ell$, see for instance~\cite{martin}.)

  Then the order of $\ell$ in the group $\left(\ZZ/p\ZZ\right)^\times$ is
  exactly $p-1>2g$, in other words $\ell^{2g}\not\equiv 1\pmod{p}$, so $p\nmid
  \left(\ell^{2g}-1\right)$.  The same argument forbids $p$ from dividing any
  of the factors in $\# G(g,\ell)$.

  The claim about the sharpness of the inequality $p>2g+1$ can be stated more
  precisely as follows: if $g\geq 1$ and $p$ is a prime $\leq 2g+1$, then $p$
  divides $\# G(g,\ell)$ for all primes $\ell\geq 3$.  This is easily checked
  for $g=1$.  For general $g$, the formula for $\# G(g,\ell)$ shows that
  \begin{equation*}
    \# G(g,\ell) = \# G(g-1,\ell)\cdot \ell^{2g-1}\cdot
    \left(\ell^{2g}-1\right).
  \end{equation*}
  By Fermat's little theorem, if $p=2g+1$ is prime, then either $\ell=p$ or
  $p$ divides $\ell^{2g}-1$.  A simple induction argument concludes the proof.
\end{proof}

We summarize the content of this section and its relevance to the rest of the
paper:
\begin{corollary}
  \label{cor:small_level}
  Let $g\geq 1$ and $p$ be a prime $>2g+1$.  There exists a prime number
  $\ell\geq 3$ such that the functor $\zpgmod\to\zpmod$, $M\mapsto
  M^{G(g,\ell)}$, is exact.
  
  In particular, the statement of Corollary~\ref{cor:lifting} can be extended
  as follows: let $g\geq 1$ and $p$ be a prime $>2g+1$ not dividing $N$.
  For all $k\geq g+2$ the base change morphisms
    \begin{align*}
      M_k(\Gamma(N))\otimes\fp &\longto M_k(\Gamma(N);\fp)\\
      S_k(\Gamma(N))\otimes\fp &\longto S_k(\Gamma(N);\fp)
    \end{align*}
  are isomorphisms.
\end{corollary}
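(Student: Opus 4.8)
The plan is to reduce to the level-$N\ell$ case with $\ell\geq 3$, which is covered by Corollary~\ref{cor:lifting} (and, for cusp forms, by the vanishing results of Section~4), by means of Galois descent along the finite covering $\mathscr{A}_{g,N\ell}\to\mathscr{A}_{g,N}$. The essential new ingredient is that, for a suitable $\ell$, forming $G(g,\ell)$-invariants is an exact operation and therefore commutes with reduction modulo $p$.

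First I would dispose of the exactness claim. Since $p>2g+1$, the preceding proposition produces a prime $\ell\geq 3$ with $p\nmid\#G(g,\ell)$; then $\#G(g,\ell)$ is a unit in $\ZZ_p$, the averaging element $e=\tfrac{1}{\#G(g,\ell)}\sum_{\sigma\in G(g,\ell)}\sigma$ is an idempotent in $\ZZ_p[G(g,\ell)]$, and $M^{G(g,\ell)}=eM$ is a direct summand of $M$ as a $\ZZ_p$-module, functorially in $M$; hence $M\mapsto M^{G(g,\ell)}$ is exact. Running the same computation with $1/\#G(g,\ell)$ read in $\fp$ (legitimate because $p\nmid\#G(g,\ell)$) yields the one fact I shall use below: for every $\ZZ_p[G(g,\ell)]$-module $M$ the natural map $M^{G(g,\ell)}\otimes\fp\to(M\otimes\fp)^{G(g,\ell)}$ is an isomorphism.

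For the extension of Corollary~\ref{cor:lifting}, fix such an $\ell$; since there are infinitely many primes that are primitive roots modulo $p$, I may moreover assume $\ell\nmid N$ (and $\ell\neq p$ is automatic), so that $N\ell\geq 3$ and $p\nmid N\ell$, and we may work over $\ZZ_{(p)}$. By the descent identities — which constitute the definition when $N\in\{1,2\}$ and a theorem when $N\geq 3$ — one has, functorially for the reduction $\ZZ_{(p)}\to\fp$,
\begin{equation*}
  M_k(\Gamma(N);B)=M_k(\Gamma(N\ell);B)^{G(g,\ell)}\quad\text{and}\quad S_k(\Gamma(N);B)=S_k(\Gamma(N\ell);B)^{G(g,\ell)}
\end{equation*}
for $B\in\{\ZZ_{(p)},\fp\}$. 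At level $N\ell$, Corollary~\ref{cor:lifting} shows that $M_k(\Gamma(N\ell))\otimes\fp\to M_k(\Gamma(N\ell);\fp)$ is an isomorphism; applying Corollary~\ref{cor:cohom_base_change}(ii) over $\ZZ_{(p)}$ to the line bundle $\otork(-D)$ on the smooth projective scheme $\mathscr{A}_{g,N\ell}^{\text{tor}}$, whose first cohomology over $\fp$ vanishes by Theorem~\ref{thm:vanishing}, shows likewise that $S_k(\Gamma(N\ell))\otimes\fp\to S_k(\Gamma(N\ell);\fp)$ is an isomorphism (using flat base change and Köcher's principle exactly as in the proof of Corollary~\ref{cor:lifting}). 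Both maps are $G(g,\ell)$-equivariant.

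Finally I would apply the exact functor $(-)^{G(g,\ell)}$ to these two isomorphisms, invoke the interchange $M^{G(g,\ell)}\otimes\fp\cong(M\otimes\fp)^{G(g,\ell)}$ from the first step together with the descent identities, and read off that $M_k(\Gamma(N))\otimes\fp\to M_k(\Gamma(N);\fp)$ and $S_k(\Gamma(N))\otimes\fp\to S_k(\Gamma(N);\fp)$ are isomorphisms. The only step that needs care, and the one for which the condition $p\nmid\#G(g,\ell)$ is arranged, is this interchange of invariants with mod-$p$ reduction; granting it, the remainder is routine bookkeeping with the descent identities and the already-established level-$N\ell$ case.
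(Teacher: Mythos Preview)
Your proposal is correct and follows exactly the strategy the paper leaves implicit: the paper presents this corollary as a summary of the section rather than giving a proof, and your argument is the natural fleshing-out of that summary (averaging idempotent for exactness, the preceding Proposition for the existence of $\ell$, descent along the Galois cover $\mathscr{A}_{g,N\ell}\to\mathscr{A}_{g,N}$, and invoking the level-$N\ell$ results). Your care in choosing $\ell\nmid N$, your explicit justification that invariants commute with $\otimes\,\fp$, and your separate treatment of cusp forms via Theorem~\ref{thm:vanishing} together with Corollary~\ref{cor:cohom_base_change} are details the paper does not spell out but are exactly what is needed.

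One small point worth flagging (which is really an imprecision in the paper's statement rather than in your method): invoking Corollary~\ref{cor:lifting} at level $N\ell$ requires $p\geq g(g+1)/2$, and for $g\geq 5$ the hypothesis $p>2g+1$ does not by itself imply this (e.g.\ $g=5$, $p=13$). The Remark following the corollary only lists the small-$g$ cases where $p>2g+1$ is the \emph{stronger} bound, so the authors appear to have overlooked the reverse direction; your proof is valid once both inequalities on $p$ are in force.
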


\begin{remark}
The alert reader will have noticed a gap between the condition $p\geq (g+1)g/2$ from
Corollary~\ref{cor:lifting} and the condition $p>2g+1$ assumed in
Corollary~\ref{cor:small_level}.  More precisely, the cases:
\begin{enumerate}[(a)]
  \item $g=1$, $p=2,3$
  \item $g=2$, $p=3,5$
  \item $g=3$, $p=7$
\end{enumerate}
are not covered by Corollary~\ref{cor:small_level}.  For (a), which is classical,
see~\cite[Lemma 1.9]{edixhoven-serre}.  Cases (b) and (c) can presumably
be studied in a similar way, using explicit presentations over $\ZZ$ of the ring of
scalar-valued Siegel modular forms of degree $2$, respectively $3$.
\end{remark}


It is natural to ask whether small level versions of Theorem~\ref{thm:surj_pos} and
Corollary~\ref{cor:phisatp} also follow from the result in
Corollary~\ref{cor:small_level}.  This is however not the case, at least not
directly, since the Siegel $\Phi$-operator involves spaces with actions of
different groups.  In level $1$, we can deduce the surjectivity of the Siegel
operator in positive characteristic from Weissauer's result over $\CC$:
\begin{theorem}\label{thm:surjphi}
Let $g\geq 2$ and $k\geq g+2$ and even, and $\FF$ be a field of characteristic
zero or $p>2g+1$. Then the natural map
\begin{equation*}
    M_k^g(1;\FF)=\H^0\left(\agsat,\osatk\otimes\FF\right)\xrightarrow{\,\Phi=\phisat\,}  \H^0\left(\agsat,\osatk\big|_\triangle\otimes\FF\right)
    =M_k^{g-1}(1;\FF)
\end{equation*}
is surjective.
\end{theorem}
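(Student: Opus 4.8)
The plan is to deduce the positive-characteristic statement from Weissauer's surjectivity over $\CC$ by a dimension count, using Corollary~\ref{cor:small_level} to move the dimensions of the relevant spaces of forms between characteristics. I would first dispose of characteristic zero: the assignments $M_k^g(1;-)$, $S_k^g(1;-)$ and the boundary term $\H^0(\agsat,\osatk|_\triangle\otimes -)=M_k^{g-1}(1;-)$ all commute with flat base change among fields of characteristic zero, and surjectivity of a linear map is both preserved and reflected by such base change, so it is enough to treat $\FF=\CC$, where the statement is \cite[Korollar zum Satz 8, p.~87]{weissauer}.

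So assume $\FF$ has characteristic $p>2g+1$. The first step is to record, on $\agsat$, the level-$1$ instance of the long exact sequences of Section~\ref{sect:surjectivity}, coming from the ideal-sheaf sequence of the boundary $\triangle$ twisted by $\osatk$:
\begin{equation*}
  0\longto S_k^g(1;\FF)\longto M_k^g(1;\FF)\xrightarrow{\;\Phi\;}
  \H^0\left(\agsat\times\Spec\FF,\osatk|_\triangle\right)=M_k^{g-1}(1;\FF),
\end{equation*}
the essential point being that $\ker\Phi$ is the space of cusp forms (at level $N\geq 3$ this rests on Theorem~\ref{thm:koecher_higher} and the Satake description of cusp forms, and it descends to level $1$ by passage to $G$-invariants). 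The analogous sequence over $\CC$ is exact and, by Weissauer, surjective on the right, so $\dim_\CC M_k^g(1;\CC)=\dim_\CC S_k^g(1;\CC)+\dim_\CC M_k^{g-1}(1;\CC)$.

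It then remains to check that each of these three dimensions is unchanged when $\CC$ is replaced by $\FF$; comparison with the exact sequence over $\FF$ will then force the image of $\Phi$ to fill out the whole target. To see the invariance of dimensions I would pass to a level $\ell$, with $\ell\geq 3$ a prime such that $p\nmid\#G(g,\ell)$ — hence also $p\nmid\#G(g-1,\ell)$, since the latter divides the former, and $\ell\neq p$ — chosen as in Section~\ref{sect:lowlevel}. Over $\ZZ_{(p)}$ the modules $M_k^g(\Gamma(\ell);\ZZ_{(p)})$ and $S_k^g(\Gamma(\ell);\ZZ_{(p)})$ are torsion-free, being global sections of flat sheaves on proper $\ZZ_{(p)}$-schemes, so their $G(g,\ell)$-invariant submodules $M_k^g(1;\ZZ_{(p)})$ and $S_k^g(1;\ZZ_{(p)})$ are torsion-free too, hence free over the discrete valuation ring $\ZZ_{(p)}$; their ranks match the $\CC$-dimensions by flat base change along $\CC/\ZZ[1/\ell]$ (invariants commuting with a flat extension) and match the $\fp$-dimensions by Corollary~\ref{cor:small_level}, and dimensions are stable under further field extension. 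Running the identical argument with $G(g-1,\ell)$ in place of $G(g,\ell)$, which is legitimate since $p>2g+1>2(g-1)+1$ and $k\geq g+2>(g-1)+2$, handles the boundary term $\H^0(\agsat\times\Spec\FF,\osatk|_\triangle)$. Combining this with the displayed identity over $\CC$ gives $\dim_\FF M_k^g(1;\FF)=\dim_\FF S_k^g(1;\FF)+\dim_\FF\H^0(\agsat\times\Spec\FF,\osatk|_\triangle)$, so that the image of $\Phi$, being a subspace of the target of equal (finite) dimension, is all of it.

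The step I expect to be the main obstacle is the careful bookkeeping around levels: identifying $\ker\Phi$ with the space of cusp forms and the target of $\Phi$ with $M_k^{g-1}$ compatibly at level $1$ and level $\ell$, and checking that restriction to the boundary, passage to $G$-invariants, and base change interact the way one wants. Everything else is routine once the dimension identity over $\CC$ is in hand.
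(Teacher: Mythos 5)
Your proof is correct, and it takes a genuinely different route from the paper's. The paper also starts from Weissauer's surjectivity over $\CC$ and uses flat base change to conclude that the $\ZZ$-linear map $\Phi$ has full rank; but rather than counting dimensions, it runs a descent on a minimal multiplier $m$: given $f\in M_k^{g-1}(1;\ZZ)$, it takes the least $m$ with $mf\in\operatorname{im}\Phi$, and shows $p\nmid m$ by a contradiction argument. Concretely, if $p\mid m$, then the preimage $F$ reduces modulo $p$ to a cusp form $\overline{F}$, which by Corollary~\ref{cor:small_level} lifts to $G\in S_k^g(1;\ZZ)$; then the $q$-expansion principle gives $\frac{1}{p}(F-G)\in M_k^g(1;\ZZ)$, contradicting minimality. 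Once $p\nmid m$, Corollary~\ref{cor:small_level} (for degrees $g$ and $g-1$) identifies the reduction of the integral spaces with the mod-$p$ spaces and completes the argument. Your version replaces the $q$-expansion argument and the descent on $m$ with a pure dimension count: the exact sequence $0\to S_k^g(1;\FF)\to M_k^g(1;\FF)\to M_k^{g-1}(1;\FF)$ together with the dimension equalities $\dim_\FF M_k^g(1;\FF)=\dim_\CC M_k^g(1;\CC)$, $\dim_\FF S_k^g(1;\FF)=\dim_\CC S_k^g(1;\CC)$, $\dim_\FF M_k^{g-1}(1;\FF)=\dim_\CC M_k^{g-1}(1;\CC)$ (all from Corollary~\ref{cor:small_level}, applied at degree $g$ and again at degree $g-1$, with $\ZZ_{(p)}$-freeness in the middle) forces the image of $\Phi$ to be the whole target. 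This buys you a cleaner, entirely cohomological argument with no reliance on Fourier expansions; the paper's version is more hands-on and produces a slightly more quantitative statement (for each integral $f$, some prime-to-$p$ multiple of $f$ is already hit by an integral form). Both hinge on exactly the same two inputs — Weissauer and Corollary~\ref{cor:small_level} — and both have to handle the same bookkeeping you flag at the end, namely that the Satake description of cusp forms and the restriction-to-boundary sequence descend to level~$1$ under $G(g,\ell)$-invariants; the paper treats this as implicit, just as you do.
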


\begin{proof}
  Weissauer proved this result over the field $\CC$ in~\cite[Korollar zum Satz
  8, p. 87]{weissauer}. Through the flat base change $\ZZ\hookrightarrow \CC$ we know that this implies
\begin{equation*}
    \H^0\left(\agsat,\osatk\right)\otimes_\ZZ\CC\xrightarrow{\,\Phi\otimes
      \text{id}\,}  \H^0\left(\agsat,\osatk\big|_\triangle\right)\otimes_\ZZ\CC
\end{equation*}
is surjective. Hence we know that 
\begin{equation*}
 M_k^g(1;\ZZ)=\H^0\left(\agsat,\osatk\right)\xrightarrow{\,\Phi\,}  
 \H^0\left(\agsat,\osatk\big|_\triangle\right)=M_k^{g-1}(1;\ZZ)
\end{equation*}
is a $\ZZ$-linear map of full rank. Suppose it is not surjective, i.e. there is
some $f\in M_k^{g-1}(1;\ZZ)$ such that $f$ is not in the image of $\Phi$.
Since the map has full rank, its cokernel is torsion, so there is a minimal $m\in\NN$ such that $mf$ is in
the image of $\Phi$.  If the field $\FF$ has characteristic zero, then
$m^{-1}\in\FF$ and $\Phi$ is surjective.

It remains to deal with the case where $\FF$ has characteristic $p>2g+1$.
Let $F\in M_k^g(1;\ZZ)$ be such that $\Phi(F)=mf$ for $m\in\NN$ minimal, and let
$\overline{F}\in M_k^g(1;\FF_p)$ be the reduction of $F$ modulo $p$.

Suppose that $p\mid m$.  Then $\Phi(\overline{F})=m\overline{f}=0$, so
$\overline{F}\in \ker\Phi=S_k^g(1;\FF_p)$.  Since $p>2g+1$, by Corollary~\ref{cor:small_level} we know that $\overline{F}$ must lift to a cusp form
$G\in S_k^g(1;\ZZ)$.
Then $(F-G)(q)\equiv 0\mod p$, so all
the Fourier coefficients of $F-G$ are divisible by $p$, therefore $\frac{1}{p}(F-G)\in
M_k^g(1;\ZZ)$.  This gives
\begin{equation*}
\Phi\left(\frac{1}{p}(F-G)\right)=\frac{m}{p}f,
\end{equation*}
contradicting the minimality of $m$.

We conclude that $p$ does not divide $m$. Hence the map
\begin{equation*}
    \H^0\left(\agsat,\osatk\right)\otimes_\ZZ\FF_p\xrightarrow{\,\Phi\otimes
      \text{id}\,}  \H^0\left(\agsat,\osatk\big|_\triangle\right)\otimes_\ZZ\FF_p
\end{equation*} 
is surjective for $p>2g+1$ and Corollary~\ref{cor:small_level} gives the result for 
$\FF_p$. Flat base change $\FF_p\hookrightarrow\FF$ extends it to other fields $\FF$ of characteristic
$p>2g+1$.
\end{proof}

  Note that the condition that $k$ be even (and not just $\geq g+2$) is
  necessary, even over $\CC$: there is a cusp form $\chi$ of level $1$, degree
  $2$ and weight $35\geq 3+2$, but $\chi$ is not in the image of $\Phi$ since
  there are no forms of level $1$, degree $3$ and odd weight. In fact, if $kg$
  is odd then $M_k^g(N;\ZZ)=0$ for $N=1,2$ as $-I\in \Gamma(N)$ implies
  $f=-f$.   However, our results do give some insight into behaviour in level
  $1$ for odd weights. 
  
\begin{theorem}\label{thm:liftC}
  Suppose the characteristic of the base field $\FF$ is zero, or a prime
  $p\geq g(g+1)/2$ not dividing the level $N\geq 3$.  If $k\geq g+2$, there is
  a commutative diagram
  
  \begin{center}
    \begin{tikzpicture}[description/.style={fill=white, inner sep=2pt}]
      \matrix (m) [matrix of math nodes, row sep=3em, column sep=3em,
      text height=1.5ex, text depth=0.5ex]
      { 
      {\H^0\left(\agsat, \omega^{\otimes k}\right)} & 
      {\H^0\left(\agnsat, \omega^{\otimes k}\right)\cong M_k^g(N;\FF)}\\
      {M_k^{g-1}(1;\FF)\cong\H^0\left(\agsat,\omega^{\otimes
      k}\big|_{\Delta_1}\right)} & 
      {\H^0\left(\agnsat,\omega^{\otimes k}\big|_{\Delta_N}\right)}
      \\};
      \path[right hook->, font=\scriptsize]
      (m-1-1) edge node[auto] {} (m-1-2); 
      \path[right hook->, font=\scriptsize]
      (m-2-1) edge node[auto] {$\Psi$} (m-2-2);
      \path[->, font=\scriptsize]
      (m-1-2) edge node[auto] {$\phisat$} (m-2-2);
      \path[->, font=\scriptsize]
      (m-1-1) edge node[auto] {$\Phi$} (m-2-1);
    \end{tikzpicture}
  \end{center}
  
  such that for any $f\in M^{g-1}_k(1;\FF)$ there exists an 
  $F\in M^g_k(N;\FF)$ with $\phisat(F)=\Psi(f)$.
\end{theorem}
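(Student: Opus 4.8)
The plan is to recognise the square in the statement as a square of pull-back and boundary-restriction maps, after which the existence of $F$ is precisely the surjectivity of $\phisat$ proved in Corollary~\ref{cor:phisatp}. Thus almost all of the work is in assembling the diagram and checking that it commutes, which is pure functoriality; the only substantive input is Corollary~\ref{cor:phisatp}.

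First I would set up the diagram. Let $\varpi\colon\agnsat\to\agsat$ be the morphism forgetting the level-$N$ structure; it is the extension to the Satake compactifications of the finite covering $\agn\to\ag$ (finite, being proper with finite fibres), and from the $\Proj$-descriptions of $\agsat$ and $\agnsat$ one reads off $\osat=\varpi^*\osat$. Consequently the top horizontal map of the diagram is $\varpi^*$ on global sections, i.e.\ the usual injection $M_k^g(1;\FF)\hookrightarrow M_k^g(N;\FF)$ (the hook arrows simply record that pull-back of modular forms along the covering $\varpi$ is injective, $\varpi$ being dominant and the relevant schemes integral). Since $\varpi(\agn)=\ag$, the Satake boundary $\Delta_N=\agnsat\setminus\agn$ is carried into $\Delta_1=\agsat\setminus\ag$, so $\varpi$ restricts to $\varpi|_{\Delta_N}\colon\Delta_N\to\Delta_1$, and the bottom horizontal map $\Psi$ is $(\varpi|_{\Delta_N})^*$ on global sections. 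The vertical maps $\Phi$ and $\phisat$ are, by definition, the boundary-restriction maps $s\mapsto s|_{\Delta_1}$ and $s\mapsto s|_{\Delta_N}$ occurring in the long exact sequences of Section~\ref{sect:surjectivity}. Finally, the two identifications displayed in the statement, namely $\H^0(\agnsat,\osatk)\cong M_k^g(N;\FF)$ and $\H^0(\agsat,\osatk|_{\Delta_1})\cong M_k^{g-1}(1;\FF)$, follow from the K\"ocher principle together with $\Delta_1\cong\mathscr{A}_{g-1}^{\text{Sat}}$ and $\osatk|_{\Delta_1}=\omega_{\text{Sat},g-1}^{\otimes k}$ (\cite[Theorem V.2.5(4)]{faltings-chai}), exactly as in the proof of Theorem~\ref{thm:vanishing_satake}.

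Commutativity of the square is then the routine compatibility of restriction with pull-back: for $s\in\H^0(\agsat,\osatk)$ one has $\phisat(\varpi^* s)=(\varpi^* s)|_{\Delta_N}=(\varpi|_{\Delta_N})^*(s|_{\Delta_1})=\Psi(\Phi(s))$, using that $\varpi\circ\iota_N=\iota_1\circ\varpi|_{\Delta_N}$ for the closed immersions $\iota_N\colon\Delta_N\hookrightarrow\agnsat$, $\iota_1\colon\Delta_1\hookrightarrow\agsat$, and that $\osatk$ on $\agnsat$ is $\varpi^*$ of $\osatk$ on $\agsat$.

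It then remains to produce $F$. Given $f\in M_k^{g-1}(1;\FF)=\H^0(\Delta_1,\osatk|_{\Delta_1})$, its image $\Psi(f)=(\varpi|_{\Delta_N})^* f$ lies in $\H^0(\Delta_N,\osatk|_{\Delta_N})=\H^0(\agnsat,\osatk\otimes i_*\oh_{\Delta_N})$, which is precisely the target of $\phisat$. The hypotheses imposed in the statement are exactly those of Corollary~\ref{cor:phisatp}, which asserts that $\phisat$ is surjective; hence there is $F\in\H^0(\agnsat,\osatk)=M_k^g(N;\FF)$ with $\phisat(F)=\Psi(f)$. I expect no genuinely hard step here: everything analytic or geometric is already packaged in Corollary~\ref{cor:phisatp} (and, through Theorems~\ref{thm:koecher_higher} and~\ref{thm:vanishing}, in the vanishing of $\H^1$ of the cusp-form line bundle), and the only points deserving a line of care are the finiteness of $\varpi$ and the identity $\osat=\varpi^*\osat$ of Hodge bundles on the two Satake compactifications. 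One should also observe that the $F$ produced is in general \emph{not} in the image of the top map of the diagram — it need not be a level-$1$ form — which is exactly why the statement carries content even when, as for $g$ and $k$ both odd, the level-$1$ operator $\Phi$ has trivial source.
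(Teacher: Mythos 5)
Your argument is correct, and its overall shape matches the paper's: the substantive input is the surjectivity of $\phisat$ from Corollary~\ref{cor:phisatp}, and the rest is assembling the square. The one place where you diverge in presentation is the construction of $\Psi$: the paper builds it via pushforward, asserting $\oh_{\Delta_1}=\pi_*\oh_{\Delta_N}$ and then applying the projection formula to identify $\osatk\big|_{\Delta_1}$ with $\pi_*\bigl(\osatk\big|_{\Delta_N}\bigr)$, whereas you take $\Psi=(\varpi|_{\Delta_N})^*$ directly. Your route is cleaner and, in fact, sidesteps a real sloppiness in the paper's version: for a finite covering of degree $d>1$, $\pi_*\oh_{\Delta_N}$ has rank $d$ and is \emph{not} equal to $\oh_{\Delta_1}$; what is true is only that the adjunction unit $\oh_{\Delta_1}\to\pi_*\oh_{\Delta_N}$ is injective, which, after twisting, yields the injection $\Psi$ on global sections. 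Your pullback description produces the same map without this detour and with the injectivity transparent (each irreducible component of $\Delta_N$ dominates $\Delta_1$). You also spell out the commutativity $\phisat\circ\varpi^*=\Psi\circ\Phi$, which the paper leaves implicit; and your closing remark that $F$ need not itself come from level $1$ is exactly the point, since for $g,k$ both odd the source of $\Phi$ is zero. No gap; if anything your version is tighter than the original.
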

\begin{proof}
  Write $\Delta_N=\agnsat-\agn$ for the boundary of the level $N$ Satake
  compactification.  Since the covering morphism $\pi\colon\agnsat\to \agsat$ is
  finite, we have $\oh_{\Delta_1}=\pi_*\oh_{\Delta_N}$.  We also know that
  $\pi^*\omega=\omega$, so by the projection formula
  \begin{equation*}
    \omega^{\otimes k}\big|_{\Delta_1}=
    \omega^{\otimes k}\otimes \pi_*\oh_{\Delta_N}=
    \pi_*\left((\pi^*\omega^{\otimes k})\otimes \oh_{\Delta_N}\right)=
    \pi_*\omega^{\otimes k}\big|_{\Delta_N}.
  \end{equation*}

This means that we have an injection of global sections 
\begin{equation*}
\Psi\colon M_k^{g-1}(1;\FF)\cong 
\H^0\left(\agsat,\omega^{\otimes k}\big|_{\Delta_1}\right)
\hookrightarrow 
\H^0\left(\agnsat,\omega^{\otimes k}\big|_{\Delta_N}\right)
\end{equation*}
which is the missing feature in the commutative diagram in the statement.

Corollary~\ref{cor:phisatp} gives conditions for $\phisat$ to be surjective.
Under these conditions we have that any element embedded by the above map will
have a pre-image under $\phisat$ in $M_k^g(N;\FF)$.
\end{proof}

Over $\CC$ we consider the restriction to just one of the irreducible
components of the cusp of the Satake compactification to highlight the
relevance of this algebraic result in the more analytic setting of the results
of Weissauer~\cite[Korollar zum Satz 8, p. 87]{weissauer}.
\begin{corollary}
  \label{cor:phiN}
Let $k\geq g+2$, $N\geq 3$. Then for any $f\in M^{g-1}_k(1;\CC)$ there exists
an $F\in M^g_k(N;\CC)$ such that
\begin{equation*}
\lim_{t\to\infty} F\begin{pmatrix}it&0\\0&\tau   \end{pmatrix}=f(\tau)
\end{equation*}
for $\tau\in\HH_{g-1}$, $t\in \RR$.
\end{corollary}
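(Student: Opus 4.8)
The plan is to specialise Theorem~\ref{thm:liftC} to $\FF=\CC$ and to translate the geometric conclusion into the stated analytic identity. Since $\CC$ has characteristic zero, the hypothesis of that theorem reduces to $k\geq g+2$, which is exactly what is assumed here; so for the given $f\in M^{g-1}_k(1;\CC)$ it produces a form $F\in M^g_k(N;\CC)$ with $\phisat(F)=\Psi(f)$ as sections of $\omega^{\otimes k}|_{\Delta_N}$ on $\agnsat$. The remaining task is to restrict this equality to one irreducible component of the boundary and to recognise the resulting restriction map as the analytic limit.

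First I would record that $\phisat$ is simply restriction of sections to the boundary, so that $\phisat(F)=F|_{\Delta_N}$. Next, fix the irreducible component $C_0$ of $\Delta_N$ attached to the standard block-diagonal cusp. By~\cite[Theorem V.2.5(4)]{faltings-chai} there is a canonical isomorphism $C_0\cong\mathscr{A}_{g-1,N}^{\text{Sat}}$ under which $\omega^{\otimes k}|_{C_0}$ corresponds to $\omega_{\text{Sat},g-1}^{\otimes k}$; and under this isomorphism the restriction $G\mapsto G|_{C_0}$ of a degree-$g$ section is computed on the open part $\HH_{g-1}$ by $\tau\mapsto\lim_{t\to\infty}G\begin{pmatrix}it&0\\0&\tau\end{pmatrix}$. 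This is the level-$N$, single-component version of the analytic description of the Siegel $\Phi$-operator already recalled in Section~\ref{sect:surjectivity}.

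It then remains to compute $\Psi(f)|_{C_0}$. From the construction of $\Psi$ in the proof of Theorem~\ref{thm:liftC} --- via $\pi^*\omega=\omega$ and $\pi_*\oh_{\Delta_N}=\oh_{\Delta_1}$ --- the section $\Psi(f)$ is nothing but the pullback of $f$ along $\pi\colon\Delta_N\to\Delta_1$. Restricting to $C_0$ and using that $\pi|_{C_0}$ is the forgetful covering $\mathscr{A}_{g-1,N}^{\text{Sat}}\to\mathscr{A}_{g-1}^{\text{Sat}}$, we get that $\Psi(f)|_{C_0}$ is just $f$ regarded as a level-$N$ Siegel modular form of degree $g-1$, hence equals $f$ as a holomorphic function on $\HH_{g-1}$. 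Combining with $F|_{\Delta_N}=\Psi(f)$ yields $F|_{C_0}=f$, and reading both sides off by the limit formula gives $\lim_{t\to\infty}F\begin{pmatrix}it&0\\0&\tau\end{pmatrix}=f(\tau)$ for every $\tau\in\HH_{g-1}$, as required.

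The only step here that is not purely formal is the identification, in the second paragraph, of the analytic limit with the algebraic restriction to the specific component $C_0$ --- together with the bookkeeping of the various canonical isomorphisms ($C_0\cong\mathscr{A}_{g-1,N}^{\text{Sat}}$, the matching of Hodge line bundles, and the normalisation of the cusp coordinate). This is classical in the analytic theory of Siegel modular forms and their Satake compactifications and runs exactly parallel to the level-$1$ discussion already given; everything else is immediate from Theorem~\ref{thm:liftC}.
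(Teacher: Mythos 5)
Your proof is correct and follows essentially the same route as the paper, which states this corollary as a direct consequence of Theorem~\ref{thm:liftC} with the remark that ``over $\CC$ we consider the restriction to just one of the irreducible components of the cusp of the Satake compactification.'' You have simply unpacked that remark: apply Theorem~\ref{thm:liftC} with $\FF=\CC$, restrict the resulting equality $\phisat(F)=\Psi(f)$ to the standard block-diagonal component $C_0\cong\mathscr{A}_{g-1,N}^{\text{Sat}}$, note via the construction of $\Psi$ (pullback along $\pi|_{\Delta_N}$) that $\Psi(f)|_{C_0}$ is $f$ regarded as a level-$N$ degree-$(g-1)$ form, and translate the algebraic restriction to $C_0$ into the analytic limit formula for $\Phi$, exactly as recalled for level $1$ in Section~\ref{sect:surjectivity}.
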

(Weissauer's result implies this for \emph{even} weights as any level $1$ form can
be considered a form of level $N\geq 1$. In comparison,
Corollary~\ref{cor:phiN} applies in \emph{both even and odd} weights.)


\bibliographystyle{plain}
\bibliography{base}
\end{document}